      \newcommand {\al}   {\alpha}          \newcommand {\bt}  {\beta}
              \newcommand {\ve}  {\varepsilon}
      \newcommand {\pl}   {\partial}        
           \newcommand {\UUU}  {{\cal U}}
      \newcommand {\RRR}  {{\mathbb R}}     \newcommand {\SSS}  {S}
      \newcommand {\ZZZ}  {{\mathbb Z}}
      \newcommand {\bbb}  {b}      \newcommand {\iii}  {k}
       \newcommand {\psystem}  {p-system}
     \newcommand {\beq}  {\begin{equation}}
      \newcommand {\eeq}  {\end{equation}}
     \newcommand {\beqo}  {\begin{equation*}}
      \newcommand {\eeqo}  {\end{equation*}}
      \newtheorem{theorem}{Theorem}
      \newtheorem{lemma}{Lemma}
      \newtheorem*{opr}{Definition}
\title{Plane sets invisible in finitely many directions}
\author{Alexander Plakhov\thanks{Center for R\&{}D in Mathematics and Applications, Department of Mathematics, University of Aveiro, Portugal and Institute for Information Transmission Problems, Moscow, Russia.}}
\begin{document}

\maketitle

\begin{abstract}
We consider the problem of mirror invisibility for plane sets. Given a circle and a finite number of unit vectors (defining the directions of invisibility) such that the angles between them are commensurable with $\pi$, for any $\ve > 0$ there exists a set invisible in the chosen directions that contains the circle and is contained in its $\ve$-neighborhood. This set is the disjoint union of infinitely many domains with piecewise smooth boundary.
\end{abstract}

\begin{quote}
{\small {\bf Mathematics subject classifications:} 49Q10, 49K30}
\end{quote}

\begin{quote}
{\small {\bf Key words and phrases:} billiard, invisibility.}
\end{quote}

\section{Introduction}

Nowadays, the problem of invisibility and its practical realization attracts a lot of attention of mathematical, physical, and technological communities. Some approaches to constructing invisibility devices are based on rapidly growing metamaterial technologies \cite{chen,gh,pendry,schurig,zhang}. A very interesting approach, in the framework of geometric optics, is related to creating a transparent refracting (with variable refraction index) coating for objects to be hidden \cite{leon1,leon2}.

Here we deal with billiard invisibility, when a mirrored object (a body) is represented by a bounded domain or by a disjoint union of (finitely or countably many) domains in Euclidean space, and propagation of light rays is represented by motion of billiard particles outside the body.

In what follows we consider billiard trajectories $x(t)$ parameterized by time $t$ and representing the motion, uniform between consecutive reflections, of a billiard particle. We concentrate on trajectories that are defined for all $t \in \RRR$ and have a finite number of reflections.

Take a bounded plane set $B \subset \RRR^2$ and consider the billiard in its complement $B^c = \RRR^2 \setminus B$. Slightly abusing the language, we say that a parameterized billiard trajectory $x(t), \, t \in \RRR$ is {\it invisible}, if the initial and final (semi-infinite) segments of the trajectory lie on the same straight line and are passed in the same direction. In other words, let $x(t) = b + vt$ for $t$ sufficiently small; the trajectory is invisible, if for $t$ sufficiently large and for a certain constant $\tau \in \RRR$ we have $x(t) = b + v(t-\tau)$. The corresponding straight line is called to be {\it associated} with the invisible trajectory (see Fig.~\ref{fig:invisTraj}).

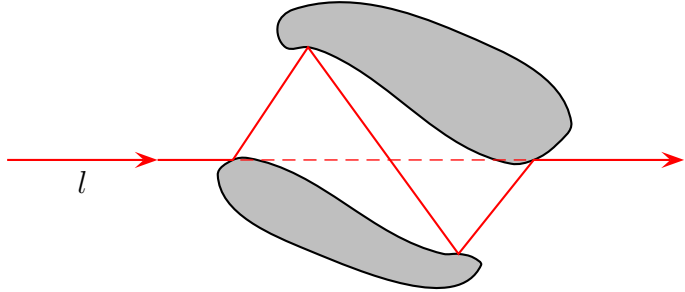
\begin{figure}
\begin{picture}(0,110)
\scalebox{1}{
\rput(7,1.4){

\psecurve[fillstyle=solid,fillcolor=lightgray]
(3.5,0.5)(2,1.7)(-0.2,2)(-0.4,1.8)(-0.3,1.5)(0,1.5)(0.3,1.4)(2.7,-0.05)(3,0)(3.3,0.2)(3.5,0.5)(2,1.7)(-0.2,2)
\psecurve[fillstyle=solid,fillcolor=lightgray]
(2.3,-1.4)(0,-1.2)(-1.2,-0.2)(-1,0)(-0.8,0.025)(1.8,-1.25)(2,-1.25)(2.2,-1.3)(2.3,-1.4)(0,-1.2)(-1.2,-0.2)
\psline[linecolor=red,arrows=->,arrowscale=2,linewidth=0.8pt](-4,0)(-2,0)
\psline[linecolor=red,arrows=->,arrowscale=2,linewidth=0.8pt](-2,0)(-1,0)(0,1.5)(2,-1.25)(3,0)(5,0)
\psline[linecolor=red,linestyle=dashed,linewidth=0.5pt](-0.9,0)(2.9,0)
\rput(-3,-0.3){\scalebox{1}{$l$}}
}
}
\end{picture}
\caption{An invisible trajectory and the associated straight line $l$.}
\label{fig:invisTraj}
\end{figure}

Let $v \in S^1$. We say that $B$ is {\it invisible in the direction of} $v$ (or {\it along the vector} $v$), if almost all\footnote{with respect to the natural one-dimensional Lebesgue measure} straight lines with the director vector $v$ are associated with invisible trajectories. Obviously, $B$ is invisible along $v$ if and only if it is invisible along $-v$. An example of a set invisible in a direction is provided in Fig.~\ref{fig:invisDir}; for more details see \cite{0-resist}.

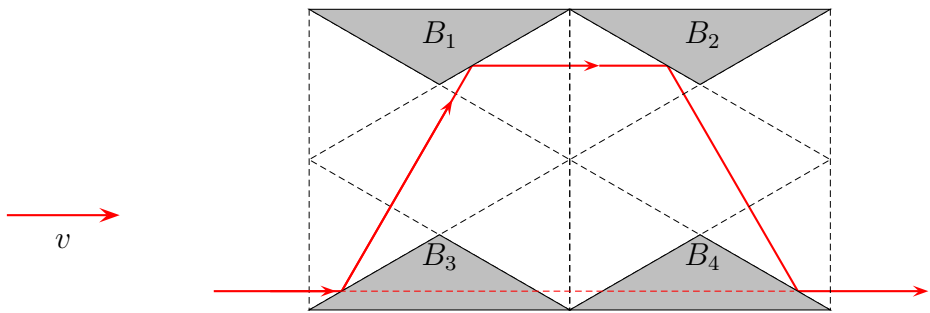
\begin{figure}[h]
\begin{picture}(0,140)

\rput{90}(8,-1.75){
\scalebox{0.5}{
\rput(8.7,4){
\pspolygon[linestyle=dashed,linewidth=0.3pt](-4,-3.4641)(4,-3.4641)(0,3.4641)
\pspolygon[linestyle=dashed,linewidth=0.3pt](-4,3.4641)(4,3.4641)(0,-3.4641)
 \psline[linewidth=1.6pt,linecolor=red,arrows=->,arrowscale=1.8](-3.5,6)(-3.5,2.75)
\psline[linewidth=1.6pt,linecolor=red,arrows=->,arrowscale=1.8](-3.5,4.5)(-3.5,2.598075)(1.6,-0.34641)
\psline[linewidth=1.6pt,linecolor=red,arrows=->,arrowscale=1.8](-3.5,2.598075)(2.5,-0.866025)(2.5,-4.25)
\psline[linewidth=1.6pt,linecolor=red,arrows=->,arrowscale=1.8](2.5,-4.25)(2.5,-6.06)(-3.5,-9.53)(-3.5,-13)

\pspolygon[fillstyle=solid,fillcolor=lightgray](-4,-3.4641)(-4,3.4641)(-2,0)
\pspolygon[fillstyle=solid,fillcolor=lightgray](4,-3.4641)(4,3.4641)(2,0)
}
\rput(8.7,-2.93){
\pspolygon[linestyle=dashed,linewidth=0.3pt](-4,-3.4641)(4,-3.4641)(0,3.4641)
\pspolygon[linestyle=dashed,linewidth=0.3pt](-4,3.4641)(4,3.4641)(0,-3.4641)

\pspolygon[fillstyle=solid,fillcolor=lightgray](-4,-3.4641)(-4,3.4641)(-2,0)
\pspolygon[fillstyle=solid,fillcolor=lightgray](4,-3.4641)(4,3.4641)(2,0)

\psline[linecolor=red,linestyle=dashed](-3.5,9.6)(-3.5,-2.5)
}
}}
 \psline[linecolor=red,arrows=->,linewidth=0.8pt,arrowscale=1.8](0.25,2)(1.75,2)
 \rput(1,1.65){\scalebox{1}{$v$}}
\rput(6,4.4){$B_1$}\rput(9.5,4.4){$B_2$}
\rput(6,1.45){$B_3$}\rput(9.5,1.45){$B_4$}

\end{picture}
\caption{The union of shaded triangles $B = B_1 \cup B_2 \cup B_3 \cup B_4$ is invisible in the direction of $v$.}
\label{fig:invisDir}
\end{figure}

Likewise, we say that $B$ is {\it invisible from a point} $O \in \RRR^2$, if almost all straight lines through $O$ are associated with invisible trajectories and, additionally, $O$ lies outside Conv$(B)$. An example of a set invisible in a direction is given in Fig.~\ref{fig:invisPoint}; for more details of this construction see \cite{invis1point} or \cite{ebook}.

\begin{figure}[h]
\begin{picture}(0,200)

\rput(6.3,3.3){
\scalebox{2}{
\rput(-0.57,0){
\scalebox{0.534}{
\psdots[dotsize=3pt](-1,0)
\rput(-1.07,-0.3){\scalebox{0.93}{$O$}}
\pspolygon[linewidth=0.03pt,linecolor=white,fillstyle=solid,fillcolor=lightgray]
(0.534,0.925)(1.871,1.732)(1.64,1.483)(1.48,1.304)(1.34,1.14)(1.223,1)(1.18,0.947)
(1.136,0.89)(1.093,0.8367)(1.0456,0.7746)(1,0.709)(0.9,0.773)(0.8,0.826)(0.7,0.87)(0.6,0.906)
\pspolygon[linewidth=0.06pt,linecolor=white,fillstyle=solid,fillcolor=lightgray]
(0.534,-0.925)(1.871,-1.732)(1.64,-1.483)(1.48,-1.304)(1.34,-1.14)(1.223,-1)(1.18,-0.947)
(1.136,-0.89)(1.093,-0.8367)(1.0456,-0.7746)(1,-0.709)(0.9,-0.773)(0.8,-0.826)(0.7,-0.87)(0.6,-0.906)
\psline[linewidth=0.2pt,linestyle=dotted,linecolor=red,dotsep=1.6pt](-1,0)(0.534,-0.925)
\psline[linewidth=0.2pt,linestyle=dotted,linecolor=red,dotsep=1.6pt](-1,0)(0.534,0.925)
\psecurve[linewidth=0.12pt](0.45,0.948)(0.534,0.925)(0.6,0.9055)(0.7,0.8689)(0.8,0.8246)(0.9,0.7714)(1,0.7071)(1.1,0.6285)
\psecurve[linewidth=0.12pt](0.45,-0.948)(0.534,-0.925)(0.6,-0.9055)(0.7,-0.8689)
(0.8,-0.8246)(0.9,-0.7714)(1,-0.7071)(1.1,-0.6285)
\pscurve[linewidth=0.12pt]
(1.871,-1.732)(1.643,-1.483)(1.483,-1.304)(1.342,-1.14)
(1.2247,-1)(1.1832,-0.9487)(1.1402,-0.8944)(1.0954,-0.8367)(1.0488,-0.7746)(1,-0.7071)
\pscurve[linewidth=0.12pt]
(1,0.7071)(1.048,0.7746)(1.0954,0.8367)(1.1402,0.8944)(1.1832,0.9487)(1.2247,1)
(1.342,1.14)(1.483,1.304)(1.643,1.483)(1.871,1.732)
\psline[linewidth=0.12pt](0.534,0.925)(1.871,1.732)
\psline[linewidth=0.12pt](0.534,-0.925)(1.871,-1.732)
}}
%%%%%%%%%%%%%%%%%%%%%%%%%%%%%%%%%%%%%%%%%%%%%%%%%%%%%%%%%%%
\psdots[dotsize=0.25pt](-1,0)%(0.534,0.925)(1.871,1.732)
\pspolygon[linewidth=0.03pt,linecolor=white,fillstyle=solid,fillcolor=lightgray]
(0.534,0.925)(1.871,1.732)(1.64,1.483)(1.48,1.304)(1.34,1.14)(1.223,1)(1.18,0.947)
(1.136,0.89)(1.093,0.8367)(1.0456,0.7746)(1,0.709)(0.9,0.773)(0.8,0.826)(0.7,0.87)(0.6,0.906)
\pspolygon[linewidth=0.03pt,linecolor=white,fillstyle=solid,fillcolor=lightgray]
(0.534,-0.925)(1.871,-1.732)(1.64,-1.483)(1.48,-1.304)(1.34,-1.14)(1.223,-1)(1.18,-0.947)
(1.136,-0.89)(1.093,-0.8367)(1.0456,-0.7746)(1,-0.709)(0.9,-0.773)(0.8,-0.826)(0.7,-0.87)(0.6,-0.906)
\psellipse[linewidth=0.01pt,linestyle=dashed,dash=1.2pt 0.8pt](0,0)(1.414,1)
\pscurve[linewidth=0.01pt,linestyle=dashed,dash=1.2pt 0.8pt]
(1.871,-1.732)(1.643,-1.483)(1.483,-1.304)(1.342,-1.14)
(1.2247,-1)(1.1832,-0.9487)(1.1402,-0.8944)(1.0954,-0.8367)(1.0488,-0.7746)(1,-0.7071)(0.9487,-0.6325)
(0.8944,-0.5477)(0.8367,-0.4472)(0.7746,-0.3162)(0.7071,0)(0.7746,0.3162)(0.8367,0.4472)(0.8944,0.5477)
(0.9487,0.6325)(1,0.7071)(1.048,0.7746)(1.0954,0.8367)(1.1402,0.8944)(1.1832,0.9487)(1.2247,1)
(1.342,1.14)(1.483,1.304)(1.643,1.483)(1.871,1.732)
\psline[linewidth=0.15pt,linestyle=dotted,dotsep=0.8pt,linecolor=blue](-1,0)(1.8284,1)
\psline[linewidth=0.15pt,linestyle=dotted,dotsep=0.8pt,linecolor=blue](-1,0)(1.8284,-1)
\pscurve[linewidth=0.06pt]
(1.871,-1.732)(1.643,-1.483)(1.483,-1.304)(1.342,-1.14)
(1.2247,-1)(1.1832,-0.9487)(1.1402,-0.8944)(1.0954,-0.8367)(1.0488,-0.7746)(1,-0.7071)
\pscurve[linewidth=0.06pt]
(1,0.7071)(1.048,0.7746)(1.0954,0.8367)(1.1402,0.8944)(1.1832,0.9487)(1.2247,1)
(1.342,1.14)(1.483,1.304)(1.643,1.483)(1.871,1.732)
\psecurve[linewidth=0.06pt](0.45,0.948)(0.534,0.925)(0.6,0.9055)(0.7,0.8689)(0.8,0.8246)(0.9,0.7714)(1,0.7071)(1.1,0.6285)
\psecurve[linewidth=0.06pt](0.45,-0.948)(0.534,-0.925)(0.6,-0.9055)(0.7,-0.8689)
(0.8,-0.8246)(0.9,-0.7714)(1,-0.7071)(1.1,-0.6285)
\psline[linewidth=0.06pt](0.534,0.925)(1.871,1.732)
\psline[linewidth=0.06pt](0.534,-0.925)(1.871,-1.732)
\psline[linewidth=0.4pt,linecolor=red,arrows=->,arrowscale=1](-1.4765,-0.2235)(-0.5235,0.2235)
\psline[linewidth=0.3pt,linecolor=red,linestyle=dashed](-0.5235,0.2235)(1.283,1.07)
\psline[linewidth=0.4pt,linecolor=red,arrows=->,arrowscale=1]
(-0.5235,0.2235)(-0.047,0.447)(0.22,-0.571)(0.4985,-0.702)
\psline[linewidth=0.4pt,linecolor=red,arrows=->,arrowscale=1]
(0.4985,-0.702)(0.777,-0.833)(1.283,1.07)(2.2,1.5)
\psdots[dotsize=0.9pt](-1,0)
}}
\end{picture}
\caption{The union of four shaded curvilinear triangles is invisible from the point $O$.}
\label{fig:invisPoint}
\end{figure}
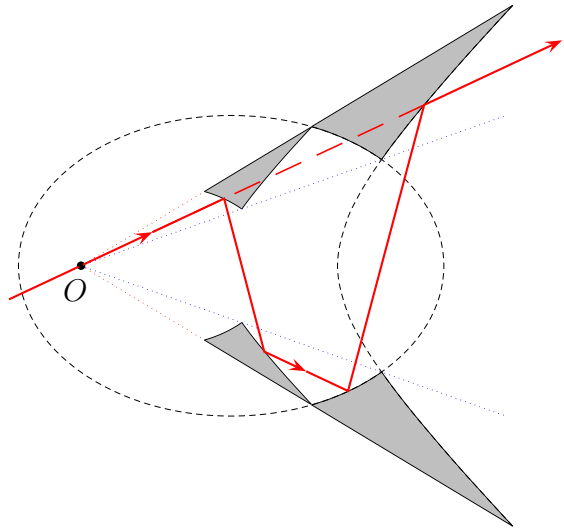

These definitions easily generalize to higher dimensions.

Invisibility in a direction can be treated as invisibility from an infinitely distant point.

In general, the problem of billiard invisibility remains open. We conjecture that

(a) given a finite set of directions and a finite set of points, there is a body (a finite or countable union of bounded domains) invisible in these directions and from these points;

(b) the set of invisible trajectories for each body has measure zero.

Conjecture (b) is closely connected with the famous Ivrii conjecture stating that the set of periodic billiard trajectories in a bounded domain has measure zero \cite{Ivrii}.

An interesting (and probably even more difficult than conjectures (a) and (b)) problem concerns the maximum dimension of the set of invisible trajectories.

Up to the moment the following is known.

$\bullet$ For any two unit vectors in $\RRR^2$ there exists a plane body invisible along these vectors \cite{invis3dir}.

$\bullet$ For any three mutually orthogonal unit vectors in $\RRR^3$ there exists a 3D body invisible along these vectors \cite{invis3dir}.

$\bullet$ For any two points in $\RRR^n \ (n \ge 2)$ there exists an $n$-dimensional body invisible from these points \cite{invis2points}.

$\bullet$ No body in $\RRR^n \ (n \ge 2)$ is invisible in all directions or, equivalently, from all points \cite{invisibility}.

The aim of this paper is to prove the following theorem.

\begin{theorem}\label{t1}
Consider a finite set of unit vectors in $\RRR^2$ such that the angle between any two vectors is commensurable with $\pi$, and let $D_1 \subset D_2 \in \RRR^2$ be two different concentric circles. Then there is a set $B$ invisible along these vectors and such that $D_1 \subset B \subset D_2$. (Here $B$ is a countably connected set, with each connected component being a domain with piecewise smooth boundary.)
\end{theorem}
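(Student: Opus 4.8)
The plan is to use the commensurability to replace the list of directions by a finite, rotation-invariant set, to build $B$ equivariantly under the associated rotation group, and thereby to reduce invisibility in all the prescribed directions to invisibility in a single one; the genuine work then concentrates in a \emph{local}, multidirectional gadget.

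\textbf{Reduction and symmetry.} I identify a unit vector with its direction $\theta\in\RRR/\pi\ZZZ$. Since the pairwise angles lie in $\pi\QQQ$, every prescribed direction satisfies $\theta_k-\theta_1\in\pi\QQQ$; choosing a common denominator $N$, they all lie in the set $\SSSS_N=\{\theta_1+j\pi/N:\,0\le j<N\}$ of $N$ equally spaced directions, which is invariant under the rotation $\rho$ through $\pi/N$. As invisibility in more directions is a stronger conclusion, I will prove it for all of $\SSSS_N$, and I will construct $B$ concentric with $D_1,D_2$ and invariant under the cyclic group of order $2N$ generated by $\rho$. Because an isometry maps billiard trajectories in $B^c$ to billiard trajectories in $B^c$, and $\rho^{\,j}v_0$ runs through all of $\SSSS_N$, such a $B$ is invisible along every direction of $\SSSS_N$ as soon as it is invisible along a single $v_0$; so it suffices to verify one direction.

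\textbf{The local gadget (the crux).} Since $B$ is confined to an $\ve$-shell around $D_1$, a generic line in direction $v_0$ meets $B$ only in two short arcs near the circle, and between them the particle travels freely along its line through the empty central region; hence it suffices to make the trajectory invisible across each crossing separately. Near a crossing point $p$ the shell is, up to $O(\ve)$, a straight strip with normal $n(p)$, and the beams of all $N$ directions crossing near $p$ arrive at incidence angles $\angle(v_j,n(p))$, which differ pairwise by the \emph{fixed} commensurable amounts $\angle(v_0,v_j)=j\pi/N$; only the common offset $\angle(v_0,n(p))$ varies with $p$. I therefore need, for each offset, a small piecewise-smooth mirror structure in a strip of width $O(\ve)$ that simultaneously returns all these $N$ parallel beams to their own families of lines in the same direction. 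I would assemble it from flat facets oriented at multiples of $\pi/N$, completed by confocal parabolic arcs that collimate the beams (accounting for the piecewise-smooth, non-polygonal boundary), extending the two-direction construction of \cite{invis3dir} and the single-direction technique of \cite{0-resist}. Commensurability is essential here: reflections in facets with orientations in $\tfrac{\pi}{N}\ZZZ$ permute the finite set $\SSSS_N$, so the rerouted beams stay within finitely many directions and can be brought back to their lines in finitely many reflections, whereas for incommensurable directions the reflected directions would be dense and no finite choreography could exist.

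\textbf{Assembly and the exceptional set.} I would cover the circle by rotated copies of these gadgets, the copy near $p$ tuned to the offset $\angle(v_0,n(p))$, and enforce $\rho$-equivariance so that the one-direction verification above applies. To cover almost every line of each direction while fitting everything inside $D_1\subset B\subset D_2$, I refine across scales: the gadgets are rescaled and nested with sizes tending to $0$, which both tracks the continuously varying offset and keeps $B$ in the $\ve$-neighbourhood of $D_1$ — this is precisely what yields a countable disjoint union of domains with piecewise-smooth boundary. It then remains to show that the set of \emph{bad} lines (those meeting gadget seams, those nearly tangent to $D_1$ in the degenerate $O(\ve)$-band, and those whose detour re-enters a neighbouring gadget) has one-dimensional measure zero, so that \emph{almost all} lines in each direction are invisible.

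\textbf{Main obstacle.} The decisive difficulty is the multidirectional local gadget: a separate gadget per direction cannot work, because beams of every direction cross every arc of the circle, so a gadget tuned to $v_0$ necessarily scatters a positive-measure set of beams in the other directions. One must instead make several beams of commensurable directions invisible \emph{at once} inside a single thin, bounded, piecewise-smooth structure, keep all rerouted trajectories within the shell, and make this design consistent with both the rotational symmetry and the cross-scale refinement; controlling the measure-zero exceptional set is the remaining, more routine, technical point.
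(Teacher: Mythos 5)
Your symmetrization step (build $B$ equivariantly under rotation by $\pi/N$ and verify a single direction) is sound and is implicitly present in the paper, which replaces the circle by a regular $2n$-gon $P_1$ and keeps every object symmetric under the symmetries of $P_1$. But the proposal has genuine gaps. First, and most seriously, your local strategy rests on the claim that between its two crossings of the thin shell a particle ``travels freely along its line through the empty central region.'' The hypothesis $D_1\subset B$ forces $B$ to contain the solid disk $D_1$ (the constructed $B$ contains the whole polygon $P\supset D_1$), so the central region is occupied by mirror and almost every line in a prescribed direction runs into it. A gadget that locally returns a beam to its own line is therefore not enough: the light must be carried \emph{around} the obstacle. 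This is exactly what the paper's global machinery does --- each portion of a flow is compressed into a ``thin flow,'' transported around the polygon along a polygonal ring by reflections off v-sets at the vertices, and decompressed by the mirror-symmetric paired s-set on the far side, invisibility following from the symmetry of each trajectory about the diagonal orthogonal to the flow. Nothing in your proposal plays this role.

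Second, the multidirectional gadget you correctly identify as the crux is not constructed, and the ingredients you propose (flat facets at multiples of $\pi/N$ completed by confocal parabolic arcs) are the ones the paper explicitly discards: confocal parabolas collimate only the flow parallel to their common axis, so for $n\ge 2$ simultaneous flows ``parabolas are useless.'' The paper instead reflects all $n$ flows off a single circular arc and uses the caustic analysis of its Lemmas 1--2 to design, for each flow separately, a second reflecting curve of length $O(\ve^2)$ placed behind that flow's caustic. Third, any added structure is itself visible: at each stage a portion of order $\ve^{1/2}$ of every flow hits the bases of the added triangles, so the bad set has positive measure at every finite stage, and your assertion that the exceptional lines form a null set is false without further work. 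The paper resolves this with an infinite hierarchy of p-systems, each generation shielding the still-visible marked boundary $J_i$ of the previous one with $|J_i|\to 0$; your ``rescaled and nested'' gadgets are aimed at the continuously varying normal direction (a problem the paper avoids altogether by passing to the polygon, where only the $n$ incidence angles $\frac{2j-1}{2n}\pi$ occur on each side), not at this recursive shielding.
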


We believe that the technical condition on the angles could be eventually omitted and that $D_1$ and $D_2$ could be taken to be arbitrary bounded convex bodies (with $D_1 \subset D_2$ and $\pl D_1 \cap \pl D_2 = \emptyset$). We also believe that the statement of the theorem generalizes to higher dimensions.

The rest of the paper is devoted to the proof of this theorem.

\section{Reformulation of Theorem \ref{t1} and the main idea of the proof}

%\subsection{The main idea}

Take a natural $n$ so that $\pi/n$ is a common multiplier of all angles between the directions of invisibility. We can take $n$ so large that a regular $2n$-gon $P_1$ circumscribed about $D_1$ is contained in $D_2$. Let us choose $P_1$ in such a way that a direction of invisibility is the external bisector of an angle of $P_1$ (and therefore all directions of invisibility are external bisectors of angles of $P_1$).

Extending if necessary the set of directions of invisibility, one can assume that it is composed of the external bisectors of {\it all} vertices of $P_1$. That is, we have the set of $2n$ unit vectors with the angle $\pi/n$ between nearest vectors (see Fig.~\ref{fig:concentric}). The angles between a side of $P_1$ and these vectors take the values $\pi/(2n)$,\, $3\pi/(2n),\ldots,(2n - 1)\pi/(2n)$.

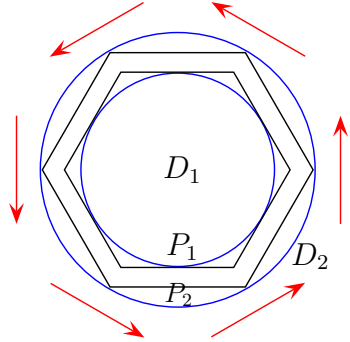
\begin{figure}[h]
\begin{picture}(0,130)
\rput(7,2.25){
\scalebox{1.8}{
\pspolygon[linewidth=0.3pt](1,0)(0.5,0.866)(-0.5,0.866)(-1,0)(-0.5,-0.866)(0.5,-0.866)
\pscircle[linewidth=0.3pt,linecolor=blue](0,0){0.72}
\pscircle[linewidth=0.3pt,linecolor=blue](0,0){1.02}
  %\psdots[dotsize=0.7pt](0,0)
}
\scalebox{1.5}{
\rput(-0.3,0){
\pspolygon[linewidth=0.35pt](1,0)(0.5,0.866)(-0.5,0.866)(-1,0)(-0.5,-0.866)(0.5,-0.866)
 %\psdots[dotsize=0.7pt](0,0)
}}
\scalebox{1.8}{
\rput(-0.47,0){
\psline[linewidth=0.3pt,linecolor=red,arrows=->,arrowscale=1.5](1.2,-0.4)(1.2,0.4)
 %\psdots[dotsize=1pt](0,0)
}}
%%%%%
\rput{60}(-0.97,-0.17){
\scalebox{1.8}{
\psline[linewidth=0.3pt,linecolor=red,arrows=->,arrowscale=1.5](1.2,-0.4)(1.2,0.4)
}}
%%%%%
\rput{120}(-0.77,-0.17){
\scalebox{1.8}{
\psline[linewidth=0.3pt,linecolor=red,arrows=->,arrowscale=1.5](1.2,-0.4)(1.2,0.4)
}}
%%%%%
\rput{180}(-0.66,0){
\scalebox{1.8}{
\psline[linewidth=0.3pt,linecolor=red,arrows=->,arrowscale=1.5](1.2,-0.4)(1.2,0.4)
 %\psdots[dotsize=0.5pt](0,0)
}}%%%%%
\rput{-60}(-0.97,0.18){
\scalebox{1.8}{
\psline[linewidth=0.3pt,linecolor=red,arrows=->,arrowscale=1.5](1.2,-0.4)(1.2,0.4)
}}
%%%%%
\rput{-120}(-0.75,0.17){
\scalebox{1.8}{
\psline[linewidth=0.3pt,linecolor=red,arrows=->,arrowscale=1.5](1.2,-0.4)(1.2,0.4)
 %\psdots[dotsize=0.5pt](0,0)
}}

\rput(-0.8,-1.025){\scalebox{1}{$P_1$}}
\rput(-0.85,-1.66){\scalebox{0.88}{$P_2$}}

\rput(-0.8,0){\scalebox{1}{$D_1$}}
\rput(0.9,-1.15){\scalebox{1}{$D_2$}}
}
\end{picture}
\caption{The regular hexagons $P_1$ and $P_2$ circumscribed about and inscribed in the concentric circles, $D_1$ and $D_2$, and 6 directions of invisibility.}
\label{fig:concentric}
\end{figure}

Consider the $2n$-gon $P_2$ similar to $P_1$ and inscribed in $D_2$. Obviously, $P_2$ contains $P_1$. Now Theorem \ref{t1} can be stated in the following equivalent form.

\begin{theorem}\label{t2}
Consider two similar regular $2n$-gons $P_1 \subset P_2$ and $2n$ unit vectors parallel to the external bisectors of the angles of $P_1$. Then there exists a set $B$ invisible along these vectors and such that $P_1 \subset B \subset P_2$.
\end{theorem}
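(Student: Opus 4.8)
The plan is to reduce the construction to a single \emph{elementary reflector} and then to assemble many rotated and rescaled copies of it inside the annular region $P_2 \setminus P_1$, exploiting the $\pi/n$-rotational symmetry of the configuration. One already knows how to make a parallel beam pass invisibly in a \emph{single} prescribed direction (cf.\ \cite{0-resist, invis3dir}): a finite mirror gadget can catch a parallel pencil of rays, route it laterally around a forbidden region by an even number of flat reflections, and release it on the very same lines, merely delayed in time. I would use such gadgets as black boxes, the relevant parameters being the direction $v$ of the beam they transmit and the strip of the plane they occupy.

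First I would fix one direction $v_k$, the external bisector at the vertex $V_k$ of $P_1$. Since $v_k$ is perpendicular to $OV_k$, the two tangents to $P_1$ parallel to $v_k$ touch $P_1$ at the opposite vertices $V_k$ and $V_{k+n}$, so the pencil of $v_k$-rays that actually meets $P_1$ is exactly the strip between these two tangents. I would tile this strip, within the annulus, by elementary reflectors that divert the rays around $P_1$ and restore them on the far side. Here the hypothesis enters decisively: each side of $P_1$ meets $v_k$ at an angle that is an odd multiple of $\pi/(2n)$, so one and the same finite angular pattern of mirrors serves every side and every direction at once. Rotating this arrangement by multiples of $\pi/n$ then produces candidate reflectors for all $n$ directions simultaneously.

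The main obstacle is \emph{simultaneous} invisibility: a reflector installed to transmit $v_k$ must not corrupt a beam travelling along $v_j$ with $j \neq k$. I would resolve this by treating the directions one at a time and insisting that every mirror added at stage $k$ be \emph{transparent} to the directions $v_1,\dots,v_{k-1}$ already handled, either by placing it where the earlier beams do not pass or by aligning it so that those beams graze it parallel to a face and emerge undisturbed. The commensurability of the angles is precisely what makes a finite common mirror pattern compatible with all $n$ directions; without it the directions could not be fitted to a single symmetric lattice of reflectors. Verifying this mutual transparency, and checking that no reflected ray re-enters $P_1$ or leaves $P_2$, is the technical heart of the argument.

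Finally, to transmit the \emph{entire} width of each beam with exact, rather than approximate, invisibility, a single finite gadget will not suffice near the vertices of $P_1$, where the strips narrow to a point; there one must insert similar copies of the elementary reflector at geometrically decreasing scales, accumulating at the vertices. This is what forces $B$ to be a disjoint union of \emph{countably} many domains, each with piecewise smooth boundary, while the union still satisfies $P_1 \subset B \subset P_2$. I expect the accumulation estimate, ensuring that the countably many reflectors together cover the beams and keep $B$ inside the annulus, to be the most delicate quantitative step.
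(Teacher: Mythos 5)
There is a genuine gap, and it sits exactly where you locate ``the technical heart of the argument'': the mutual non-interference of the reflectors for different directions. Your black-box gadget routes a captured pencil \emph{of the same width as the incident pencil} around $P_1$; the routing channel must therefore encircle the polygon with width comparable to the beam itself, and its mirror walls are necessarily hit transversally by the flows in the other $2n-2$ directions (a mirror can graze at most the two directions parallel to its face). No placement or alignment makes such full-width channels simultaneously transparent to all the other flows. The paper's essential idea, absent from your proposal, is \emph{flow compression}: each captured sub-pencil of width $O(\ve)$ is reflected off a circular arc and then off a carefully designed small curve (Lemmas~\ref{l1}--\ref{l3}), and after a further parabolic compression becomes a parallel ``thin flow'' of arbitrarily small width, which is what travels around $P_1$ along a narrow polygonal ring and is then symmetrically decompressed on the far side. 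Only because the routed beams are made arbitrarily thin can the interfering structures be made small enough to control.

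The second missing idea is that even after compression the added reflectors are \emph{not} themselves invisible: the bases of the auxiliary triangles and the corner reflectors still intercept a portion $O(\ve^{1/2})$ of the incident flows. The paper removes this residual visibility by an infinite recursion --- a tree of ``p-systems'' in which each system's visible remnants are shielded by child systems at the next stage, with the total visible length tending to zero --- together with a verification (Lemma~\ref{l:traj}) that trajectories handled by one system never meet the sets of any other. This recursion, not the narrowing of the tangent strips near the vertices of $P_1$, is why $B$ has countably many components. Your proposal replaces this mechanism with geometrically decreasing copies accumulating at the vertices, which addresses a different (and comparatively minor) issue and leaves the self-visibility of the gadgets uncorrected; as written, the resulting set would remain visible in every direction.
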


Let us first explain the main idea of the proof. The key element of the underlying construction involves a collection of $n+1$ curves, a big one and $n$ small ones. We have $n$ parallel flows of particles incident on the big curve. Each incident flow (say, the $i$th one) is reflected first from the big curve and then from the corresponding (the $i$th) small one, and turns into another parallel flow, with the width much smaller than that of the original one.  That is, the $n$ incident parallel flows after two reflections are transformed into $n$ compressed parallel flows, and the compression ratio can be chosen to be arbitrarily small (see Fig.~\ref{fig:systemofcurves}).

\begin{figure}
\begin{picture}(0,100)
\scalebox{1}{
\rput(7,3.3){

\psarc[linewidth=1.2pt](0,1.06){4}{-113}{-67}
\psline[linecolor=red,arrows=->,arrowscale=2,linewidth=0.4pt](-1.4,0)(-0.95,-0.88)
\psline[linecolor=red,arrows=->,arrowscale=2,linewidth=0.4pt](-0.95,-0.88)(0.1,-2.93)(1.35,-0.35)(2.35,-0.35)
\psline[linecolor=red,arrows=->,arrowscale=2,linewidth=0.4pt](-2.9,0)(-2.433,-0.867)
\psline[linecolor=red,arrows=->,arrowscale=2,linewidth=0.4pt](-2.433,-0.867)(-1.5,-2.6)(1.8,-0.15)(2.8,-0.15)
\psline[linecolor=red,arrows=->,arrowscale=2,linewidth=0.4pt](0,0)(0.5,-0.867)
\psline[linecolor=red,arrows=->,arrowscale=2,linewidth=0.4pt](0.5,-0.867)(1.5,-2.6)(1,-0.55)(2,-0.55)
\pscurve[linewidth=1.2pt](1,-0.55)(1.3,-0.35)(1.8,-0.15)
%%%%%%%%%%%%%%%%%%%%%%%%%%%%%%%%%%%%%%%%%%%%%%%%%%%%%%%%
\psline[linecolor=red,arrows=->,arrowscale=2,linewidth=0.4pt](3,-0.75)(1.5,-1.367)
\psline[linecolor=red,arrows=->,arrowscale=2,linewidth=0.4pt](1.5,-1.367)(-1.5,-2.6)(-3.52,-1.325)(-1.62,-1.325)
\psline[linecolor=red,arrows=->,arrowscale=2,linewidth=0.4pt](3.7,-1.08)(2.233,-1.67)
\psline[linecolor=red,arrows=->,arrowscale=2,linewidth=0.4pt](2.233,-1.67)(-0.7,-2.85)(-3.54,-1.38)(-1.6,-1.38)
\psline[linecolor=red,arrows=->,arrowscale=2,linewidth=0.4pt](4.5,-1.375)(3.3,-1.865)
\psline[linecolor=red,arrows=->,arrowscale=2,linewidth=0.4pt](3.5,-1.783)(1.5,-2.6)(-3.1,-1.6)(-3.56,-1.5)(-1.56,-1.5)
\pscurve[linewidth=1.2pt](-3.51,-1.3)(-3.54,-1.38)(-3.57,-1.56)

}
}
\end{picture}
\caption{The system of a big curve and two small ones transforms two incident parallel flows into two horizontal parallel flows of smaller width.}
\label{fig:systemofcurves}
\end{figure}
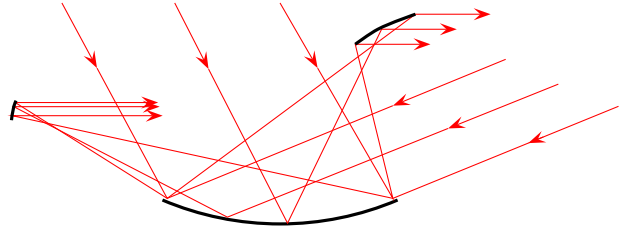

Note in passing that a single incident parallel flow $(n = 1)$ can easily be compressed using two arcs of confocal parabolas, but in the general case of $n \ge 2$ incident flows, parabolas are useless. Instead, we take the big curve to be a circular arc, and carefully design the $n$ small curves.

To make an object invisible, we shield it from the incident flows by adding several collections of curves described above. Each incident flow is reflected by several big curves (circular arcs). The portion of the flow reflected by each arc is then transformed into a parallel flow of smaller width, which goes around the body and is then transformed back into the same portion of the original flow (see Fig.~\ref{fig:attemptinvis}).

\begin{figure}
\begin{picture}(0,130)
\scalebox{1}{
\rput(7,1.8){

\psline(-1.75,-1)(0,-2)(1.75,-1)
\psline(1.75,1)(0,2)(-1.75,1)
\psline[linestyle=dashed,linewidth=0.4pt](-1.75,1)(-1.75,-1)
\psline[linestyle=dashed,linewidth=0.4pt](1.75,1)(1.75,-1)
\pscurve(1.75,-1)(1.65,-0.67)(1.72,-0.33)
\pscurve(1.72,0.33)(1.65,0)(1.72,-0.33)
\pscurve(1.75,1)(1.65,0.67)(1.72,0.33)
%%%%%%%%%%%%%%%%%%%%%%%%%%%%%%%%%%%%%%
\pscurve(-1.75,-1)(-1.65,-0.67)(-1.72,-0.33)
\pscurve(-1.72,0.33)(-1.65,0)(-1.72,-0.33)
\pscurve(-1.75,1)(-1.65,0.67)(-1.72,0.33)
\psline[linecolor=red,arrows=->,arrowscale=1.75,linewidth=0.3pt](-4,-1)(-2.8,-1)
\psline[linecolor=red,arrows=->,arrowscale=1.75,linewidth=0.3pt](-4,-0.67)(-2.8,-0.67)
\psline[linecolor=red,linewidth=0.3pt](-2.8,-1)(-1.75,-1)(-2.25,-0.88)
\psline[linecolor=red,linewidth=0.3pt](-1.65,-0.67)(-2.25,-0.88)
\psline[linecolor=red,arrows=->,arrowscale=1.5,linewidth=0.8pt](-2.25,-0.88)(-2.25,-0.25)
\psline[linecolor=red,arrows=->,arrowscale=1.5,linewidth=0.8pt](-2.25,-0.25)(-2.25,1.3)(0,2.6)
\psline[linecolor=red,arrows=->,arrowscale=1.5,linewidth=0.8pt](0,2.6)(2.25,1.3)(2.25,-0.25)
\psline[linecolor=red,linewidth=0.8pt](2.25,-0.25)(2.25,-0.88)
\psline[linecolor=red,arrows=->,arrowscale=1.75,linewidth=0.3pt](-4,-0.33)(-2.8,-0.33)
\psline[linecolor=red,linewidth=0.3pt](-2.8,-0.33)(-1.75,-0.33)(-2.25,-0.88)
%%%%%
\psline[linecolor=red,linewidth=0.3pt](1.75,-1)(2.25,-0.88)
\psline[linecolor=red,linewidth=0.3pt](1.65,-0.67)(2.25,-0.88)
\psline[linecolor=red,linewidth=0.3pt](1.75,-0.33)(2.25,-0.88)
%%%%%%%%%%%%%%%%%%%%%%%%%%%%%%%%%%%%%%%%%%%%%%%%%%%%%%%%%%%%%%%%%%%%%%%%%%%%
\psline[linecolor=red,arrows=->,arrowscale=1.75,linewidth=0.3pt](-4,0.33)(-2.8,0.33)
\psline[linecolor=red,arrows=->,arrowscale=1.75,linewidth=0.3pt](-4,0)(-2.8,0)
\psline[linecolor=red,linewidth=0.3pt](-2.8,0.33)(-1.75,0.33)(-2.1,-0.15)
\psline[linecolor=red,linewidth=0.3pt](-1.65,0)(-2.1,-0.15)
\psline[linecolor=red,arrows=->,arrowscale=1.5,linewidth=0.8pt](-2.1,-0.15)(-2.1,0.4)
\psline[linecolor=red,arrows=->,arrowscale=1.5,linewidth=0.8pt](-2.1,0.4)(-2.1,1.2)(0,2.4)
\psline[linecolor=red,arrows=->,arrowscale=1.5,linewidth=0.8pt](0,2.4)(2.1,1.2)(2.1,0.4)
\psline[linecolor=red,linewidth=1pt](2.1,0.4)(2.1,-0.15)
\psline[linecolor=red,linewidth=0.3pt](-1.75,-0.33)(-2.1,-0.15)
%%%%%
\psline[linecolor=red,linewidth=0.3pt](1.75,0.33)(2.1,-0.15)
\psline[linecolor=red,linewidth=0.3pt](1.65,0)(2.1,-0.15)
\psline[linecolor=red,linewidth=0.3pt](1.75,-0.33)(2.1,-0.15)
%%%%%%%%%%%%%%%%%%%%%%%%%%%%%%%%%%%%%%%%%%%%%%%%%%%%%%%%%%%%%%%%%%%%%%%%%%%%
\psline[linecolor=red,arrows=->,arrowscale=1.75,linewidth=0.3pt](-4,1)(-2.8,1)
\psline[linecolor=red,arrows=->,arrowscale=1.75,linewidth=0.3pt](-4,0.67)(-2.8,0.67)
\psline[linecolor=red,arrows=->,arrowscale=1.75,linewidth=0.3pt](-4,0.33)(-2.8,0.33)
\psline[linecolor=red,linewidth=0.3pt](-2.8,0.33)(-1.75,0.33)(-1.95,0.75)
\psline[linecolor=red,linewidth=0.3pt](-2.8,1)(-1.75,1)(-1.95,0.75)
\psline[linecolor=red,linewidth=0.3pt](-1.65,0.67)(-1.95,0.75)
\psline[linecolor=red,arrows=->,arrowscale=1.5,linewidth=0.8pt](-1.95,0.75)(-1.95,1.1)(0,2.2)
\psline[linecolor=red,arrows=->,arrowscale=1.5,linewidth=0.8pt](0,2.2)(1.95,1.1)(1.95,0.75)
%%%%%
\psline[linecolor=red,linewidth=0.3pt](1.75,1)(1.95,0.75)
\psline[linecolor=red,linewidth=0.3pt](1.65,0.67)(1.95,0.75)
\psline[linecolor=red,linewidth=0.3pt](1.75,0.33)(1.95,0.75)
%%%%%%%%%%%%%%%%%%%%%%%%%%%%%%%%%%%%%%%%%%%%%%%%%%%%%%%%%%%%%%%%%%%%%%%%%%%%
\psline[linecolor=red,arrows=->,arrowscale=1.75,linewidth=0.3pt](1.75,-1)(4,-1)
\psline[linecolor=red,arrows=->,arrowscale=1.75,linewidth=0.3pt](1.75,-0.33)(4,-0.33)
\psline[linecolor=red,arrows=->,arrowscale=1.75,linewidth=0.3pt](1.75,1)(4,1)
\psline[linecolor=red,arrows=->,arrowscale=1.75,linewidth=0.3pt](1.75,0.33)(4,0.33)
\psline[linecolor=red,arrows=->,arrowscale=1.75,linewidth=0.3pt](2.4,0.67)(4,0.67)
\psline[linecolor=red,arrows=->,arrowscale=1.75,linewidth=0.3pt](2.4,-0.67)(4,-0.67)
\psline[linecolor=red,arrows=->,arrowscale=1.75,linewidth=0.3pt](2.4,0)(4,0)
}
}
\end{picture}
\caption{An attempt to create an invisible body. Only a part of an incident flow is shown, and only two sides of the polygon (the original body) are substituted by hollows in the picture.}
\label{fig:attemptinvis}
\end{figure}
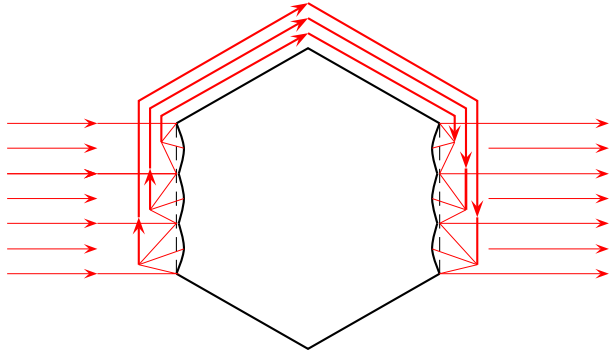

Unfortunately, the added objects (collections of curves) are themselves not invisible. In other words, the resulting body remains visible, but its "visibility"{} is significantly reduced. This observation suggests an idea to define an infinite procedure. The first step of the procedure has just been done. At the second step we make the added objects invisible by adding some more collections of curves (of the second order), and so on. As a result of infinitely many steps we obtain an (infinitely connected) body which is completely invisible.

Of course the devil is in the details, and the rest of the paper is devoted to a rigorous implementation of the above idea. Some details of this simple scheme will be further modified; for instance, we shall actually perform {\it double compression} of incident flows. (For this purpose, the number of small curves will be triplicated.)

First we prove two lemmas.

\section{Two lemmas on flow compression}

In what follows, by {\it light ray} we mean a directed segment of a billiard trajectory between two consecutive reflections. In particular, an {\it incident light ray} is a directed half-line representing the part of a billiard trajectory before the first reflection.

Consider a vertical flow of light rays incident on the graph of a  $C^3$ function $y = \phi(x)$ defined on a segment $[-x_0,\, x_0]$ and such that $\phi''(x) > 0$. (Thus, $\phi$ is strictly convex.) Note that the reflected flow forms a caustic, and the light ray reflected by the graph at $(x, \phi(x))$ will then touch the caustic at the point $L_x(1)$, where
\beqo\label{eq:caustic}
L_x(\tau) = (x, \phi(x)) + \frac{\tau}{2\phi''(x)} (-2\phi'(x),\, 1 - \phi'^2(x)),
\eeqo
and in particular (using the shorthand notation $\phi(0) = \phi_0,\, \phi'(0) = \phi'_0,\, \phi''(0) = \phi''_0$), the light ray reflected at the point $(0, \phi_0)$ will touch the caustic at the point $L_0(1) = (0, \phi_0) + \frac{1}{2\phi''_0} (-2\phi'_0,\, 1 - \phi'^2_0)$.

Indeed, the tangency point is also the point of intersection of two infinitesimally close reflected rays
$$
(x, \phi(x)) + t (-2\phi'(x),\, 1 - \phi'^2(x)) \ \, \text{and} \ \,(x+dx, \phi(x+dx)) + (t+dt) (-2\phi'(x+dx),\, 1 - \phi'^2(x+dx)).
$$
Equating these two expressions and neglecting terms of higher orders, we obtain that $t = 1/(2\phi''(x))$, and therefore the tangency point is $L_x(1)$.

Consider the part of the graph corresponding to $-\ve \le x \le \ve$, where $\ve > 0$ is a small parameter. The length of the corresponding caustic is $O(\ve)$, and the maximum angle between tangent lines at different points of the caustic is also $O(\ve)$. Therefore the width of the reflected flow behind the caustic and at a distance $O(\ve)$ from the caustic is $O(\ve^2)$. Thus, one can expect that a curve crossing this flow (behind the caustic and at a distance $O(\ve)$ from it) in a required direction has the length $O(\ve^2)$.

Formalizing these ideas, we get the following lemma.
       %, where the condition "behind the caustic" is guaranteed by an appropriate choice of $c_0$, and the condition "at a distance $O(\ve)$ from the caustic" is provided by choosing a constant $c$.

\begin{lemma}\label{l1}
There is a real value $c_0 > 0$ such that for all $l > c_0$, in an $O(\ve^2)$-neighborhood of the point $L_0(1 + l\ve)$ there is a smooth curve of length $O(\ve^2)$ such that a vertical flow of particles with the $x$-coordinate $-\ve \le x \le \ve$, after two reflections from the graph of $\phi$ and from this curve is transformed into a parallel flow of width $O(\ve^2)$ in the direction $(1, \phi'_0)$ (see Fig.~\ref{fig:lem1}). Given a constant $c > c_0$, the above estimates $O(\ve^2), \ \ve \to 0$ are uniform in the interval $l \in [c_0,\, c]$. The second reflection induces a smooth one-to-one correspondence between incident light rays and points of the curve.
\end{lemma}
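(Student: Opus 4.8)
The plan is to realize the small curve as a transversal section of the reflected-ray flow and to pin it down by the reflection law. Writing the reflected ray through $(x,\phi(x))$ as $L_x(\tau)$, with director vector $u(x)=(-2\phi'(x),\,1-\phi'^2(x))$, I would look for the curve in the form $\gamma(x)=L_x(\tau(x))$ subject to $\tau(0)=1+l\ve$. The demand that $\gamma$ send every reflected ray into the single fixed direction $w=(1,\phi'_0)$ means, by the law of reflection, that the tangent $\gamma'(x)$ be parallel to the bisector direction $u(x)/|u(x)|+w/|w|$ (reflection across the line spanned by $\hat u+\hat w$ interchanges $\hat u$ and $\hat w$). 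Substituting $\gamma'(x)=\pl_x L_x(\tau)+\tau'(x)\,\pl_\tau L_x(\tau)$ and expressing this parallelism as the vanishing of a $2\times2$ determinant yields a first-order ODE $\tau'=G(x,\tau)$. Since $u(0)\times w=-(1+\phi'^2_0)\neq0$, the coefficient of $\tau'$ is nonzero at $x=0$, hence (for small $\ve$) on all of $[-\ve,\ve]$, so $G$ is regular there, even across the caustic value $\tau=1$; standard existence--uniqueness then gives a $C^1$ solution with $\tau(0)=1+l\ve$, and by construction the curve $\gamma$ reflects every incident ray into the direction $w$, so the outgoing flow is parallel.

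The crux is the size estimate. The key observation is that $\pl_\tau L_x$ is parallel to $u(x)$, so the component of $\gamma'(x)$ perpendicular to $u(x)$ comes only from $\pl_x L_x(\tau)$; moreover a short computation shows $\pl_x L_x(1)=\frac{d}{dx}L_x(1)$ is parallel to $u$, so the perpendicular part of $\pl_x L_x(\tau)$ is proportional to $\tau-1$. Along the curve $\tau-1=O(\ve)$, hence the perpendicular component of $\gamma'$ is $O(\ve)$; because $\gamma'\parallel u/|u|+w/|w|$ and $w$ is not parallel to $u$, this already forces $|\gamma'(x)|=O(\ve)$. Integrating from $\gamma(0)=L_0(1+l\ve)$ then gives simultaneously that $\gamma$ has length $\int_{-\ve}^{\ve}|\gamma'|\,dx=O(\ve^2)$ and that $\gamma$ lies in an $O(\ve^2)$-neighborhood of $L_0(1+l\ve)$; since the outgoing rays are parallel, the width of the outgoing flow equals the transverse spread of $\gamma$, which is likewise $O(\ve^2)$.

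For the claimed bijection I would compute the Jacobian of $(x,\tau)\mapsto L_x(\tau)$, which equals $\frac{(1-\tau)(1+\phi'^2)}{2\phi''}$ and is nonzero for $\tau\neq1$; thus away from the caustic the reflected rays foliate a neighborhood, and $\gamma$---being transversal to every ray (as $u\times w\neq0$)---meets each ray exactly once, giving a smooth one-to-one correspondence. This is exactly where the constant $c_0$ enters, and I expect it to be the main obstacle: one must keep $\tau(x)$ bounded away from the caustic value $1$ along the \emph{whole} curve. From the ODE one finds $\tau'=\phi'''/\phi''+O(\ve)$ while $\tau-1=O(\ve)$, so $|\tau'|\le C$ with $C$ depending only on $\sup|\phi'''|$ and $\inf\phi''>0$; a continuity (bootstrap) argument then gives $\tau(x)-1\ge(l-C)\ve$, and choosing $c_0>C$ guarantees $\tau(x)-1=\Theta(\ve)>0$. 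This captures the tension built into the statement: the placement must scale like $1+l\ve$ to make $\gamma$ an $O(\ve^2)$-object, yet must stay a distance $\Theta(\ve)$ beyond the caustic to keep the ray-to-curve correspondence bijective and the second reflection nondegenerate---possible precisely for $l>c_0$. Finally, every constant above involves only the sup-norms of $\phi,\phi',\phi''$ together with $\sup|\phi'''|$, $\inf\phi''$, and the bounded parameter $l$, so all the estimates are uniform on $l\in[c_0,\,c]$.
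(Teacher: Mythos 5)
Your proposal is correct and follows essentially the same route as the paper: you seek the curve as $L_x(\tau(x))$, turn the reflection condition (tangent parallel to $\hat u+\hat w$) into a regular first-order ODE, use a bootstrap to keep $\tau(x)-1=\Theta(\ve)>0$ beyond the caustic for $l>c_0$, and deduce $|\gamma'|=O(\ve)$ (hence length and diameter $O(\ve^2)$) from the fact that the transversal component of $\pl_x L_x(\tau)$ vanishes linearly in $\tau-1$ --- which is exactly the paper's factor $1-2t\phi''(x)$ in $W'_x$. The supporting computations (the Jacobian $\frac{(1-\tau)(1+\phi'^2)}{2\phi''}$, the threshold $c_0$ governed by $\sup|\phi'''|/\inf\phi''$, and the transversality argument for the one-to-one correspondence) all check out and match the paper's.
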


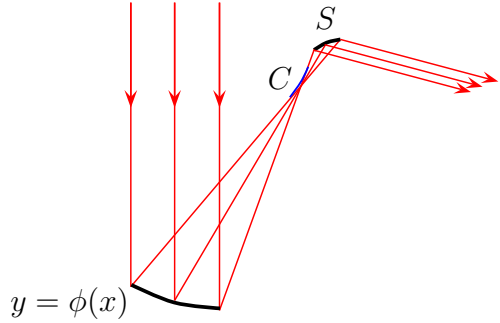
\begin{figure}
\begin{picture}(0,120)
\scalebox{1.4}{
\rput(6.25,5){

%\psarc[linewidth=1.2pt](0,0){5}{-110}{-100}
\psline[linecolor=red,linewidth=0.4pt,arrows=->,arrowscale=1.5](-0.868,-2)(-0.868,-3)
\psline[linecolor=red,linewidth=0.4pt](-0.868,-2)(-0.868,-4.924)
\psline[linecolor=red,linewidth=0.4pt](-0.868,-4.924)(0.03,-2.45)%(2.5,4.33)
%%%%%%%%%%%%%%%%%%%%%%%%%%%%%%%%%%%%%%%%%%%%%%%%
\psline[linecolor=red,linewidth=0.4pt,arrows=->,arrowscale=1.5](-1.71,-2)(-1.71,-3)
\psline[linecolor=red,linewidth=0.4pt](-1.71,-2)(-1.71,-4.7)
\psline[linecolor=red,linewidth=0.4pt](-1.71,-4.7)(0.28,-2.35)%(4.33,2.5)
%%%%%%%%%%%%%%%%%%%%%%%%%%%%%%%%%%%%%%%%%%%%%%%%
\psline[linecolor=red,linewidth=0.4pt,arrows=->,arrowscale=1.5](-1.294,-2)(-1.294,-3)
\psline[linecolor=red,linewidth=0.4pt](-1.294,-2)(-1.294,-4.83)
\psline[linecolor=red,linewidth=0.4pt](-1.294,-4.83)(0.15,-2.38)%(3.536,3.536)
\pscurve[linewidth=0.5pt,linecolor=blue](-0.2,-2.9)(-0.0865,-2.7385)(-0.026,-2.6105)
%%%%%%%%%%%%%%%%%%%%%%%%%%%%%%%%%%%%%%%%%%%%%%%%
\pscurve[linewidth=1pt](-0.863,-4.906)(-1.299,-4.85)(-1.705,-4.682)
\pscurve[linewidth=1pt](0.03,-2.45)(0.15,-2.38)(0.28,-2.35)
\psline[linecolor=red,linewidth=0.4pt,arrows=->,arrowscale=1.4](0.28,-2.35)(1.78,-2.75)
\psline[linecolor=red,linewidth=0.4pt,arrows=->,arrowscale=1.4](0.02,-2.45)(1.52,-2.85)
\psline[linecolor=red,linewidth=0.4pt,arrows=->,arrowscale=1.4](0.13,-2.4)(1.63,-2.8)
}
}
\rput(5.5,0.2){$y = \phi(x)$}
\rput(8.3,3.2){$C$}
\rput(8.9,4){$S$}
\end{picture}
\caption{The vertical flow is reflected by the graph $y = \phi(x)$. The reflected flow touches the caustic $C$; then it is reflected once again by a smaller curve $S$ and is transformed into another (thinner) parallel flow.}
\label{fig:lem1}
\end{figure}

Let us comment this lemma. The crossing curve should not intersect the caustic; otherwise it forms an undesirable cusp at the point of intersection. By choosing an appropriate $c_0 > 0$, one ensures that the curve is situated behind the caustic at a safe distance, so that non-intersection is guaranteed. By fixing $c > c_0$, one ensures that this distance is at most $O(\ve)$.

In the particular case when $\phi$ is a polynomial of the second order (and therefore its graph is an arc of a parabola with vertical axis), the caustic degenerates to a point --- the focus of the parabola. In this case the width of the reflected flow is proportional to the distance from the focus to the second reflecting curve (which is also an arc of a parabola) and can be made arbitrarily small.

\begin{proof}
The particle reflected at $(x, \phi(x))$ further moves with velocity $(1 + \phi'^2(x))^{-1} (-2\phi'(x),\, 1 - \phi'^2(x))$ and spends the time $t(1 + \phi'^2(x))$ to reach the point
$$
W(x,t) = (x, \phi(x)) + t(-2\phi'(x),\, 1 - \phi'^2(x)).
$$
The partial derivatives of $W$ are $W'_x = (1 - 2t \phi''(x))\, (1,\, \phi'(x))$ and $W'_t = (-2\phi'(x),\, 1 - \phi'^2(x))$, and their vector product is
$$
W'_x \times W'_t = (1 - 2t \phi''(x)) (1 + \phi'^2(x)).
$$
The condition $W'_x \times W'_t = 0$ means that the vectors $W'_x$ and $W'_t$ are linearly dependent and is equivalent to the equation $t = 1/(2\phi''(x))$ determining the caustic. Outside the caustic the map $(x,t) \mapsto W(x,t)$ is a local diffeomorphism.

For the sake of simplicity of notation, define
$$
\tau_1(x) = \frac{(-2\phi'(x),\, 1 - \phi'^2(x))}{1 + \phi'^2(x)}, \quad \tau_2(x) = \frac{(1,\, \phi'(x))}{\sqrt{1 + \phi'^2(x)}}, \quad
\tau_2^0 = \tau_2(0) = \frac{(1,\, \phi'_0)}{\sqrt{1 + \phi'^2_0}}.
$$
We are looking for a reflecting curve in the form $W(x, t(x)), \ x \in [-\ve,\, \ve]$. The condition stating that the vector $\tau_1$, after a reflection from the curve at the point $W(x, t(x))$ becomes the vector $\tau_2^0$, means that the tangent vector to the curve
\beq\label{eq:differen}
\frac{d}{dx} W(x, t(x)) = W'_x + W'_t\, t'(x) = (1 - 2t \phi''(x)) \sqrt{1 + \phi'^2(x)} \tau_2(x) + t'(x) (1 + \phi'^2(x)) \tau_1(x)
\eeq
(a) is parallel to the sum $\tau_1(x) + \tau_ 2^0$ and (b) does not vanish.

The vector $\tau_2^0$ can be decomposed in the basis $\tau_1(x)$,\, $\tau_2(x)$ as follows:
$$
\tau_2^0 = \al(x) \tau_1(x) + (1 + \bt(x)) \tau_2(x),
$$
where $\al(x)$ and $\bt(x)$ are smooth function satisfying $\al(0) = 0 = \bt(0)$. Then the above conditions (a) and (b) take, respectively, the form of the ODE for the unknown function $t = t(x)$
\beq\label{eq:ode}
t' = \frac{1 + \al(x)}{1 + \bt(x)}\ \frac{1 - 2\phi''(x)\, t}{\sqrt{1 + \phi'^2(x)}}, \quad x \in [-\ve,\, \ve]
\eeq
and of the inequality
\beq\label{ineq}
t \ne \frac{1}{2\phi''(x)}.
\eeq
The solution of the (linear inhomogeneous of the first order) differential equation (\ref{eq:ode}) exists for all initial conditions $t(-\ve)$, provided that $1 + \bt(x) \ne 0$ in $[-\ve,\, \ve]$. It remains to check inequality (\ref{ineq}).

Take a value $c_1 > 0$ such that for $\ve > 0$ sufficiently small and for $x \in [-\ve,\, \ve]$ holds
$$
\frac{\phi_0''}{1 + c_1\ve} < \phi''(x) < \phi_0'' (1 + c_1\ve)
$$
(here we use that $\phi$ is in the class $C^3$), and take arbitrary $c$ and $c_0$ such that $c_1 < c_0 < c$. Additionally, for $\ve$ sufficiently small and for $x \in [-\ve,\, \ve]$ we have
$$
0 < \frac{1 + \al(x)}{1 + \bt(x)} < 2 \quad \text{and} \quad \sqrt{1 + \phi'^2(x)} < 2\sqrt{1 + \phi'^2_0}.
$$
Choose an arbitrary $l$ satisfying $c_0 \le l \le c$ and take the initial value of our ODE (\ref{eq:ode})
$$
t(-\ve) = \frac{1 + l\ve}{2\phi_0''}.
$$
Then for $\ve$ sufficiently small we have
$$
1 - 2\phi''(-\ve)t(-\ve) < 1 - \phi''(-\ve)\, \frac{1 + c_1\ve}{\phi_0''} < 0.
$$

Let us prove by {\it reductio ad absurdum} that
\beq\label{ineq-}
1 - 2\phi''(x)t(x) < 0 \quad \text{for all} \ \, x \in [-\ve,\, \ve],
\eeq
and therefore, inequality (\ref{ineq}) is always satisfied. Assume that (\ref{ineq-}) is violated for some $x$ and take $x^* = \inf \{ x \in [-\ve,\, \ve] : 1 - 2\phi''(x)t(x) \ge 0 \}$. We have $-\ve < x^* \le \ve$ and
\beq\label{eq:contr}
1 - 2\phi''(x^*)t(x^*) = 0.
\eeq
For $x < x^*$ one has $t'(x) < 0$, hence $t(x) \le t(-\ve) < (1 + c\ve)/(2\phi_0'')$. Therefore
$$
t'(x) > 2[1 - 2\phi''(x)t] > 2\, \Big[1 - 2\phi_0'' (1 + c_1\ve)\ \frac{1 + c\ve}{2\phi_0''} \Big] > -2\ve\, (2c + c^2 \ve),
$$
and so, for $\ve$ sufficiently small
$$
t(x^*) \ge t(-\ve) + 2\ve \inf_{x \in [-\ve,\, x^*]} t'(x) \ge \frac{1 + c_0\ve}{2\phi_0''} - 4\ve^2\, (2c + c^2\ve) > \frac{1 + c_1\ve}{2\phi_0''} > \frac{1}{2\phi''(x^*)}.
$$
These inequalities contradict equation (\ref{eq:contr}); thus, (\ref{ineq-}) is proved and condition (b) is verified.

Now, using the inequalities $-\ve (2c + c^2\ve) < 1 - 2\phi''(x)\, t(x) < 0$ and $-2\ve\, (2c + c^2\ve) < t'(x) < 0$ and equation (\ref{eq:differen}), one obtains
$$
\Big| \frac{d}{dx} W(x, t(x)) \Big| \le |1 - 2\phi''(x)\, t(x)| \sqrt{1 + \phi'^2(x)} + |t'(x)| (1 + \phi'^2(x)) \le 10\ve (2c + c^2\ve)\, (1 + \phi'^2_0),
$$
and therefore,
$$
\frac{d}{dx} W(x, t(x)) = O(\ve).
$$

Thus, the length of the curve $W(x, t(x)), \ x \in [-\ve,\, \ve]$ is $O(\ve^2)$ (more precisely, it does not exceed $20\ve^2 (2c + c^2\ve)\, (1 + \phi'^2_0)$).

We have $t(0) = t(-\ve) + O(\ve^2) = (1 + l\ve)/(2\phi_0'') + O(\ve^2)$, and so, the middle point $W(0,t(0))$ of the curve is
$$
W(0,t(0)) = (0, \phi_0) + \Big( \frac{1 + l\ve}{2\phi''_0} + O(\ve^2) \Big) (-2\phi'_0,\, 1 - \phi'^2_0) = L_0(1 + l\ve) + O(\ve^2);
$$
thus, the entire curve is contained in an $O(\ve^2)$-neighborhood of $L_0(1 + l\ve)$.

For $\ve$ sufficiently small the light rays of the flow, after the reflection from graph$(\phi)$, are not parallel to any of the tangent lines of the curve, and therefore intersect the curve only once. The light ray corresponding to a value $x, \ x \in [-\ve,\, \ve]$ is reflected by the point $W(x, t(x))$ of the curve. Thus, the mapping $x \mapsto W(x, t(x))$ establishes a smooth one-to-one mapping between the incident rays and points of the curve.
\end{proof}

Now apply Lemma \ref{l1} to the case where an inclined parallel flow is reflected by a circular hollow. Let $\al \ (0 < \al < \pi)$ be the angle of inclination (that is, the velocity of the incident flow be $(\cos\al, -\sin\al)$) and the hollow be of the form
\beq\label{hollow}
y = -\sqrt{r^2 - x^2}, \quad -\ve \le x \le \ve.
\eeq
Both incident and reflected light rays can be viewed as chords of a circle of radius $r$. The reflected light ray touches the caustic, and the tangency point divides the corresponding chord in the ratio 1 : 3 (from the point of reflection). This can be easily seen from the following geometric consideration (see Fig.~\ref{fig:circle}).

Two chords corresponding to reflected light rays are resting on two circular arcs with the angular measures related as 1 : 3. Two triangles formed by these chords are similar, with the ratio of their opposite (resting on the corresponding arcs) sides going to 1 : 3 when one of the chords approaches the other one. Hence, the ratio of lengths of the parts of each chord divided by the intersection point has the same limit.

\begin{figure}
\begin{picture}(0,100)
\scalebox{1}{
\rput(7.5,2){

\psarc[linewidth=1.2pt](0,0){2}{-110}{-70}
\pscircle[linewidth=0.2pt](0,0){2}
\pscircle[linewidth=0.6pt,linestyle=dashed](0,-1.5){0.5}
\psline[linecolor=red,linewidth=0.4pt](-1.732,1)(0,-2)(1.732,1)
\psline[linecolor=red,linewidth=0.4pt](-1.813,0.845)(-0.174,-1.99)(1.932,0.518)
\psline[linewidth=0.2pt](-2.5,-2)(2.5,-2)
\psarc[linewidth=0.3pt](-0.17,-2){1}{120}{180}
\rput(-1.35,-1.7){$\al$}
}
}
\end{picture}
\caption{Two nearby parallel light rays reflected by a circular arc.}
\label{fig:circle}
\end{figure}
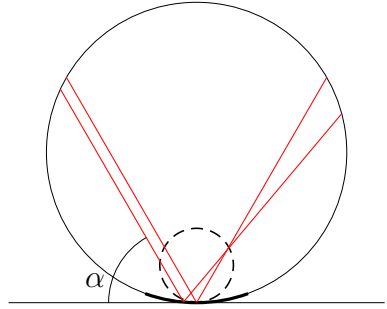

Consider the incident light ray hitting the hollow at the lower point (center of the hollow). The length of the corresponding chord is $2r\sin\al$. Therefore the distance from the point of reflection to the point where the reflected ray touches the caustic is $\frac{1}{2} r\sin\al$. This point lies on the circumference of radius $r/4$ (dashed in Fig.~\ref{fig:circle}) touching the original circumference at the lower point. Parameterize the reflected ray by
\beq\label{Rline}
R(\tau) = R_{\al}(\tau) = (0, -r) + \frac{\tau}{2}\, r\sin\al\, (\cos\al,\, \sin\al);
\eeq
the point of tangency $R(1)$ of this ray to the caustic lies on the aforementioned circumference of radius $r/4$.

The following lemma is a direct consequence of Lemma \ref{l1}.

\begin{lemma}\label{l2}
Fix $r$ and $\al$, and fix the horizontal vector $v_0 = (1,0)$ or $(-1,0)$. There is a real value $c_0 = c_0(r,\al) > 0$ such that for all $l > c_0$, in an $O(\ve^2)$-neighborhood of the point $R(1 + l\ve)$ there exists a smooth curve of length $O(\ve^2)$ such that the flow falling at the angle $\al$ on the hollow (\ref{hollow}), after two reflections, first from the hollow and then from the curve, turns into a horizontal flow of width $O(\ve^2)$ in the direction $v_0$. The tangent to the curve at each point has an inclination angle $\al/2 + O(\ve)$ with respect to the horizontal line, if $v_0 = (1,0)$, and $\al/2 - \pi/2 + O(\ve)$, if $v_0 = (-1,0)$. For any constant $c > c_0$, the above estimates $O(\ve^2)$ are uniform in $l \in [c_0,\, c]$.
\end{lemma}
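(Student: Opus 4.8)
The plan is to deduce Lemma \ref{l2} from Lemma \ref{l1} by a rigid motion that straightens the inclined incident flow into a vertical one. Let $g$ be the composition of the rotation by the angle $\al - \pi/2$ with the translation sending the lowest point $(0,-r)$ of the hollow (\ref{hollow}) to the origin. A direct check shows that the rotation carries $(\cos\al,-\sin\al)$ to $(0,-1)$, so in the new frame the incident flow is vertical and directed downwards, exactly as in Lemma \ref{l1}, while the central ray of Lemma \ref{l2} (the one hitting $(0,-r)$) becomes the ray hitting the new origin. The motion $g$ sends the circle to a congruent circle, hence the hollow to a circular arc; first I would verify that near the new origin this arc is the graph of a strictly convex $C^\infty$ function $\phi$. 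Indeed, the horizontal tangent to the circle at $(0,-r)$ is sent to the direction $(\sin\al,-\cos\al)$, which is non-vertical precisely because $\sin\al>0$ for $0<\al<\pi$, and the centre of the circle maps to $(r\cos\al,\,r\sin\al)$, lying above the contact point, so the flow strikes the concave side and $\phi''>0$. Thus all hypotheses of Lemma \ref{l1} hold.

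Applying Lemma \ref{l1} in the new frame yields, for every $l>c_0$, a smooth curve of length $O(\ve^2)$ in an $O(\ve^2)$-neighbourhood of $L_0(1+l\ve)$ that turns the once-reflected flow into a parallel flow of width $O(\ve^2)$ in the direction $(1,\phi'_0)$; pulling it back by $g^{-1}$ gives the curve claimed in Lemma \ref{l2}, with all estimates, the uniformity in $l\in[c_0,c]$, and the one-to-one correspondence between incident rays and points of the curve inherited verbatim (a rigid motion preserves lengths, neighbourhoods and incidence). The one point needing care is the identification of the reference point: in both $L_0(\tau)$ and $R(\tau)$ the parameter $\tau$ is the affine parameter along the central reflected ray normalised so that $\tau=1$ is the caustic tangency point. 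This normalisation is rigid-motion invariant, the two parametrisations agree at $\tau=0$ (the reflection point) and at $\tau=1$ (the caustic, located at $R(1)$ by the $1:3$ chord-division argument established before the lemma), and both are affine, so $R(\tau)=g^{-1}L_0(\tau)$ identically. Hence the pulled-back curve lies in an $O(\ve^2)$-neighbourhood of $R(1+l\ve)$; the fact that the new horizontal coordinate scales the original one by the factor $\sin\al+O(\ve)$ only rescales $l$ by a factor depending on $(r,\al)$, which is harmlessly absorbed into $c_0$ and $c$.

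It remains to read off the output direction and the inclination of the curve $S$. Lemma \ref{l1} delivers the output direction $(1,\phi'_0)$, which is the tangent to the arc-graph at its centre; since $g^{-1}$ returns this to the tangent of the original circle at $(0,-r)$, the output flow is horizontal, and the two orientations $\pm(1,0)$ give the two cases of the lemma (for $v_0=(-1,0)$ one may instead first reflect the whole configuration across the $y$-axis, which replaces $\al$ by $\pi-\al$ and $(1,0)$ by $(-1,0)$, reducing to the case already treated, and which sends inclination $(\pi-\al)/2$ to $\al/2-\pi/2$ modulo $\pi$, as required). For the inclination of $S$, the central ray leaves the hollow in the direction $(\cos\al,\sin\al)$, obtained by flipping the vertical component of the incident velocity across the horizontal tangent at $(0,-r)$; the reflection law then forces the tangent of $S$ to be parallel to $(\cos\al,\sin\al)+v_0$, whose inclination is $\al/2$ for $v_0=(1,0)$ and $\al/2-\pi/2$ for $v_0=(-1,0)$, and along the curve of length $O(\ve^2)$ the once-reflected direction varies by $O(\ve)$, accounting for the stated correction. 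I expect the only genuinely delicate point to be the transfer of the hypotheses of Lemma \ref{l1} under $g$ together with the parametrisation matching $R(\tau)=g^{-1}L_0(\tau)$; the remaining direction computations are routine reflection-law bookkeeping.
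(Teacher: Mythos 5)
Your proposal is correct and follows essentially the same route as the paper: rotate coordinates so that the incident flow becomes vertical and the hollow becomes the graph $\phi(x)=-\sqrt{r^2-(x-r\cos\al)^2}$, apply Lemma~\ref{l1}, read off the tangent inclination from the reflection law, and obtain the case $v_0=(-1,0)$ from the case $(\pi-\al,(1,0))$ by mirror symmetry in the vertical axis. Your explicit verification that $R(\tau)=g^{-1}L_0(\tau)$ (via affineness and agreement at $\tau=0$ and at the caustic point $\tau=1$) is a correct filling-in of a step the paper leaves implicit.
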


Notice that this curve (let it be called the {\it second reflecting curve}) is situated at the height $\frac r2 \sin^2\al + O(\ve)$ above the horizontal line $y = -r$ (tangent to the hollow at its lower point).

\begin{proof}
Suppose that $v_0 = (1,0)$. It suffices to take a coordinate system $Oxy$ where the axis $Oy$ is parallel and counter-directional to the incident flow and passes through the center of the hollow (that is, the midpoint of the arc). In this system the function $\phi$ takes the form $\phi(x) = -\sqrt{r^2 - (x - r\cos\al)^2}$.

Applying Lemma \ref{l1} and then passing to the original coordinate system, one comes to the claim of Lemma \ref{l2}. In particular, since at each point of the curve there is a reflection, and the light ray has the inclination $\al + O(\ve)$ before and is horizontal after the reflection, we conclude that the tangent to the curve at this point has the inclination $\al/2 + O(\ve)$.

Note that the second reflecting curve corresponding to the angle of incidence $\al$ and $v_0 = (-1,0)$ is symmetric to the curve corresponding to $\pi - \al$ and $v_0 = (1,0)$, and $R_\al(1 + l\ve)$ is symmetric to $R_{\pi-\al}(1 + l\ve)$, with respect to the vertical line $x = 0$. This implies the claim of the lemma for $v_0 = (-1,0)$.
\end{proof}

\section{Constructing a system of reflecting sets on a fixed level}

Now we are going to incorporate the second reflecting curve into a set of a very special form called {\it s-set} ("s"{} from side). Take again a circular hollow and a parallel flow incident on it at an angle $\al \ (0 < \al < \pi)$. Consider the coordinate system such that the hollow has the form (\ref{hollow}), and correspondingly the flow velocity is $(\cos\al, -\sin\al)$.

We first define an auxiliary object called {\it s-triangle}; it is an isosceles triangle with the horizontal base and the angle $\pi/(2n)$ at the base, and with the apex turned downward. The length of the base is $\ve^{3/2}$ (therefore it is much greater than $\ve^2$ but much smaller than $\ve$). In Figs.~\ref{fig:s-triangle1} and \ref{fig:s-triangle2}, $ABC$ is an s-triangle. Let $MN$ be its midline with the midpoint $P$, and let $Q$ be the midpoint of the base $AC$. We additionally require that the midpoint of $PQ$ coincides with the point $R_\al(1 + l\ve)$. Thus, the s-triangle corresponding to a given incident flow and to a given hollow is uniquely defined by the parameters $\ve,\, r,\, \al$, and $l$, and will further be denoted by $\triangle_{\ve,r,\al,l}$.

By Lemma \ref{l2}, the second reflecting curve corresponding to the parameter $l$ lies in an $O(\ve^2)$-neighborhood of $R_\al(1 + l\ve)$, and therefore belongs to the trapezoid $ACNM$. Its inclination is positive, if the final direction of the flow is $(1,0)$, and negative otherwise. Consider the figure bounded below by the curve and two horizontal segments and above by a part of the broken line $MACN$. The horizontal segments join the endpoints of the curve with the segments $AM$ and $CN$. In Fig.~\ref{fig:s-triangle1} two examples of such a figure, with small and large $\al$, and with the final direction $v_0 = (1,0)$ "to the right"{} are displayed.

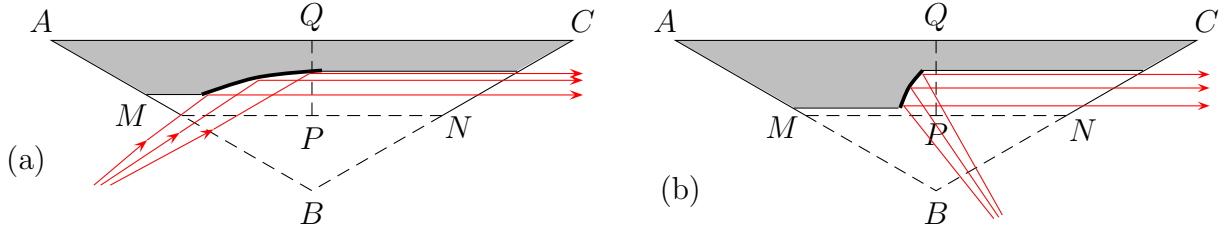
\begin{figure}
\begin{picture}(0,90)
\scalebox{1}{
\rput(3.7,2.5){

\pspolygon[linewidth=0pt,linecolor=white,fillstyle=solid,fillcolor=lightgray]
(0,-0.41)(2.73,-0.41)(3.464,0)(-3.464,0)(-2.2,-0.72)(-1.45,-0.72)(-1,-0.55)(-0.5,-0.48)%(0,-0.5)
\psline[linewidth=0.4pt](-1.9,-0.9)(-3.464,0)(3.464,0)(1.732,-1)%(0,-2)
\psline[linewidth=0.4pt,linestyle=dashed](-1.9,-0.9)(0,-2)(1.732,-1)
\psline[linewidth=0.4pt,linestyle=dashed](-1.732,-1)(1.732,-1)
\psline[linewidth=0.4pt,linestyle=dashed](0,-1)(0,0)
   \rput(0,0){\scalebox{0.8}{
   \pscurve[linewidth=1.75pt](-1.9,-0.9)(-1,-0.62)(0.1,-0.5)
   }}
   \psline[linewidth=0.4pt](0,-0.41)(2.73,-0.41)
   \psline[linewidth=0.4pt](-2.2,-0.72)(-1.47,-0.72)
%\psline[linewidth=0.4pt,linestyle=dashed](0,-0.5)(2.6,-0.5)
   \rput(-0.01,-0.05){\scalebox{0.75}{
\psline[linecolor=red,linewidth=0.53pt,arrows=->,arrowscale=1.9](-3.63,-2.5)(-1.8,-1.51)
\psline[linecolor=red,linewidth=0.53pt,arrows=->,arrowscale=1.9](-3.8,-2.5)(-2.4,-1.57)
\psline[linecolor=red,linewidth=0.53pt,arrows=->,arrowscale=1.9](-3.92,-2.5)(-2.975,-1.7)
\psline[linecolor=red,linewidth=0.53pt,arrows=->,arrowscale=1.9](-1.8,-1.51)(-0.1,-0.52)(4.75,-0.52)
\psline[linecolor=red,linewidth=0.53pt,arrows=->,arrowscale=1.9](-2.4,-1.57)(-1,-0.64)(4.75,-0.64)
\psline[linecolor=red,linewidth=0.53pt,arrows=->,arrowscale=1.9](-2.975,-1.7)(-1.9,-0.9)(4.75,-0.9)
   }}
\rput(-3.6,0.25){$A$}
\rput(3.6,0.25){$C$}
\rput(0,-2.3){$B$}
\rput(0,-1.3){$P$}
\rput(0,0.3){$Q$}
\rput(-2.4,-1){$M$}
\rput(1.95,-1.15){$N$}
\rput(-3.8,-1.6){(a)}
}
%%%%%%%%%%%%%%%%%%%%%%%%%%%%%%%%%%%%%%%%%%%%%%%%%%%
\rput(12,2.5){

\pspolygon[linewidth=0pt,linecolor=white,fillstyle=solid,fillcolor=lightgray]
(-0.2,-0.4)(2.75,-0.4)(3.464,0)(-3.464,0)(-1.89,-0.9)(-0.5,-0.9)(-0.4,-0.72)
\psline[linewidth=0.4pt](-1.732,-1)(-3.464,0)(3.464,0)(1.732,-1)%(0,-2)
\psline[linewidth=0.4pt,linestyle=dashed](1.732,-1)(0,-2)(-1.732,-1)
\psline[linewidth=0.4pt,linestyle=dashed](-1.732,-1)(1.732,-1)
\psline[linewidth=0.4pt,linestyle=dashed](0,-1)(0,0)
   \rput(-0.25,0.1){\scalebox{1}{
   \pscurve[linewidth=1.4pt](-0.3,-1)(-0.18,-0.72)(0,-0.5)
   }}
\psline[linewidth=0.4pt](-0.2,-0.4)(2.75,-0.4)
\psline[linewidth=0.4pt](-1.89,-0.9)(-0.5,-0.9)
\rput(0,-1.25){$P$}
      \rput(-0.23,0.05){\scalebox{0.95}{
\psline[linecolor=red,linewidth=0.42pt,arrows=->,arrowscale=1.47](0.98,-2.55)(-0.28,-0.97)(4,-0.97)
\psline[linecolor=red,linewidth=0.42pt,arrows=->,arrowscale=1.47](1.05,-2.53)(-0.18,-0.72)(4,-0.72)
\psline[linecolor=red,linewidth=0.42pt,arrows=->,arrowscale=1.47](1.1,-2.5)(-0.01,-0.53)(4,-0.53)
      }}
\rput(-3.6,0.25){$A$}
\rput(3.6,0.25){$C$}
\rput(0,-2.3){$B$}
\rput(0,0.3){$Q$}
\rput(-2.05,-1.15){$M$}
\rput(1.95,-1.2){$N$}
\rput(-3.4,-2){(b)}
}
}
\end{picture}
\caption{Constructing an s-set: the initial stage. The second reflecting curves (shown in bold) correspond to the cases where (a) $\al$ is small and (b) $\al$ is close to $\pi$. In both cases, the flow behind the caustic is reflected by the curve and turns into a horizontal flow of width $O(\ve^2)$ directed to the right.}
\label{fig:s-triangle1}
\end{figure}

Let us now define the third and fourth reflecting curves. They are arcs of confocal parabolas with the horizontal axis of symmetry. The lower endpoint of the third curve is $N$, and the upper endpoint lies on the horizontal segment to the right of the second curve. The fourth curve is contained in the triangle $MNB$ and is homothetic to the third curve with the homothety center at the common focus and with a small homothety ratio (see Fig.~\ref{fig:s-triangle2}). This pair of curves serves for further compression of the flow; the horizontal flow of width $O(\ve^2)$, after two reflections from these curves, turns into another flow in the same direction (hereafter referred to as the {\it thin flow}) of even smaller width, and the compression ratio coincides with the homothety ratio and can be made arbitrarily small (see Fig.~\ref{fig:s-triangle2}).

     %Require additionally that the common focus of the parabolas lies on the line $BP$; then the third and fourth reflecting curves are uniquely determined by 7 parameters: $\ve, \, r, \, \al, \, l,\, v_0 = (\pm 1, 0)$, the homothety ratio, and the position of the focus on $BP$ (see Fig.~\ref{fig:s-triangle2}).

Consider two domains: the former (larger) one is the part of the s-triangle $ABC$ bounded by the base $AC$; a part of the segment $AM$; the segment $CN$; the horizontal segment joining the side $AB$ with the lower endpoint of the second reflecting curve; the second and third reflecting curves; and the horizontal segment joining their upper endpoints. The latter (smaller) domain is the curvilinear triangle bounded by the fourth reflecting curve and vertical and horizontal line segments. The union of these domains is called to be an {\it s-set related to the incident flow and to the hollow under consideration}, and $ABC$ is called to be the {\it s-triangle associated with this s-set}.  %An s-set is uniquely defined by the 6 aforementioned parameters.

\begin{figure}
\begin{picture}(0,110)
\scalebox{1.2}{

\rput(7,2.3){

\pspolygon[linewidth=0pt,linecolor=white,fillstyle=solid,fillcolor=lightgray]
(0,-0.5)(1.52,-0.5)(1.63,-0.75)(1.65,-1.05)(3.464,0)(-3.464,0)(-1.8,-0.96)(-0.3,-0.96)(-0.2,-0.72)
\psline[linewidth=0.4pt](-1.732,-1)(-3.464,0)(3.464,0)(1.732,-1)
\psline[linewidth=0.3pt,linestyle=dashed](1.732,-1)(0,-2)(-1.732,-1)
\psline[linewidth=0.4pt](-1.8,-0.96)(-0.3,-0.96)
\psline[linewidth=0.3pt,linestyle=dashed](0,-2)(0,0)
\pscurve[linewidth=1pt](-0.3,-0.96)(-0.18,-0.72)(0,-0.5)
\psline[linewidth=0.4pt](0,-0.5)(1.52,-0.5)

     \rput(-0.065,-0.025){\scalebox{0.95}{
\psline[linecolor=red,linewidth=0.4pt,arrows=->,arrowscale=1.5](-0.28,-0.97)(0.4,-0.97)
\psline[linecolor=red,linewidth=0.4pt,arrows=->,arrowscale=1.5](-0.18,-0.75)(0.5,-0.75)
\psline[linecolor=red,linewidth=0.4pt,arrows=->,arrowscale=1.5](-0.01,-0.53)(0.6,-0.53)
\psline[linecolor=red,linewidth=0.4pt,arrows=->,arrowscale=1.3](-0.28,-0.97)(1.73,-0.97)(-0.577,-1.41)(2.5,-1.41)
\psline[linecolor=red,linewidth=0.4pt,arrows=->,arrowscale=1.3](-0.18,-0.75)(1.68,-0.75)(-0.56,-1.483)(2.5,-1.483)
\psline[linecolor=red,linewidth=0.4pt,arrows=->,arrowscale=1.3](-0.01,-0.53)(1.57,-0.53)(-0.523,-1.557)(2.5,-1.557)
     }}

\rput(-3.6,0.25){\scalebox{0.8}{$A$}}
\rput(3.6,0.25){\scalebox{0.8}{$C$}}
\rput(0,-2.3){\scalebox{0.8}{$B$}}
\rput(0,0.3){\scalebox{0.8}{$Q$}}

\rput(-0.32,-0.63){\scalebox{0.8}{2}}
\rput(1.75,-0.66){\scalebox{0.8}{3}}
\rput(-0.72,-1.34){\scalebox{0.8}{4}}

     \rput(-0.13,0){\scalebox{1}{
\pscurve[linewidth=1pt](1.7,-1.05)(1.68,-0.75)(1.57,-0.51)
      }}

     \rput(-0.07,0.04){\scalebox{1}{
\psdots[dotsize=1.8pt](0,-1.3)
\pscurve[linewidth=0.83pt](-0.577,-1.39)(-0.56,-1.483)(-0.523,-1.557)
\psline[linewidth=0.83pt](-0.577,-1.39)(-0.577,-1.557)(-0.523,-1.557)
     }}
}
}
\end{picture}
\caption{Constructing an s-set: the final stage. Reflections of the flow from the third and fourth curves (arcs of parabolas) are shown. The second, third, and fourth reflecting curves are labeled by "2"{}, "3"{}, and "4". The s-set is the union of two domains; the former one is shown light gray and the latter one is the tiny spot near the mark "4".}
\label{fig:s-triangle2}
\end{figure}
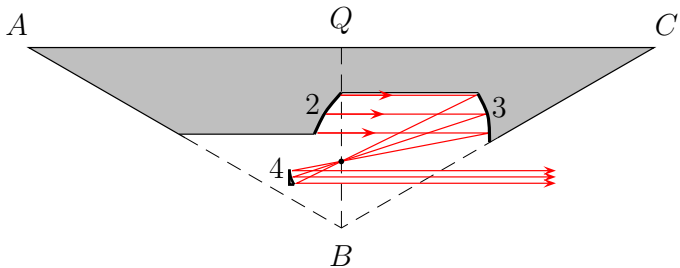

Parameterize the segment $BP$ by $B_\tau$,\, $\tau \in [0,\, 1]$, so as $B_0 = B$ and $B_1 = P$. From the above description we deduce that for any interval $(\tau_1,\, \tau_2) \subset [0,\, 1]$, the resulting thin horizontal flow can be made to pass between the points $B_{\tau_1}$ and $B_{\tau_2}$ (that is, at any height and with arbitrarily small width). Let the common focus of the parabolas be the midpoint of the interval $B_{\tau_1}B_{\tau_2}$, and let the second arc of parabola (the fourth reflecting curve) be chosen to be the maximal one, provided that the resulting thin flow goes between $B_{\tau_1}$ and $B_{\tau_2}$ and the smaller domain is contained in the s-triangle. Thus, the resulting s-set is uniquely determined by the parameters $\ve, \, r, \, \al, \, l,\, v_0 = (\pm 1, 0)$, and $(\tau_1,\, \tau_2)$. % (see Fig.~\ref{fig:s-triangle2}).

The above conclusions can be summarized in the following lemma.  %(Recall that the function below $R(\tau) = R_\al(\tau)$ is defined by (\ref{Rline}).)

\begin{lemma}\label{l3}
Take an interval $(\tau_1,\, \tau_2) \subset [0,\, 1]$ and fix $v_0 \in \{ (1,0),\, (-1,0) \}$. There is a real value $c_0 = c_0(r,\al) > 0$ such that for all $l > c_0$, in an $O(\ve^2)$-neighborhood of the point $R_\al(1 + l\ve)$ there is an s-set such that the flow falling at the angle $\al$ on the hollow (\ref{hollow}), after a reflection from the hollow and three reflections from the s-set turns into a thin horizontal flow in the direction $v_0$ contained between the points $B_{\tau_1}$ and $B_{\tau_2}$ (see Fig.~\ref{fig:l3}). For any fixed $c > c_0$ the above estimate $O(\ve^2)$ is uniform in $l \in [c_0,\, c]$. The s-triangle $\triangle_{\ve,r,\al,l}$ associated with the s-set does not depend on the choice of $\tau_1$,\, $\tau_2$, and $v_0$.
\end{lemma}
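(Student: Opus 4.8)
The plan is to assemble the construction already laid out in the text from Lemmas \ref{l1} and \ref{l2}, verifying only that the successive reflecting curves fit together geometrically and that the four reflections chain into the claimed thin flow. First I would take $c_0 = c_0(r,\al)$ to be the constant furnished by Lemma \ref{l2}, and for each $l > c_0$ invoke that lemma to produce the second reflecting curve in an $O(\ve^2)$-neighborhood of $R_\al(1+l\ve)$: the inclined flow, after reflecting off the hollow (\ref{hollow}) and then off this curve, becomes a horizontal flow in the direction $v_0$ of width $O(\ve^2)$, with all estimates uniform on $l \in [c_0,\, c]$ for any fixed $c > c_0$. The inclination of the curve is positive or negative according to $v_0$, again by Lemma \ref{l2}.

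The first thing to check is that this second curve genuinely lies in the trapezoid $ACNM$ of the s-triangle, so that the subsequent reflections take place inside $ABC$. This is where the separation of length scales does the work: the base $AC$ has length $\ve^{3/2}$, so the trapezoid has height of order $\ve^{3/2}$, which dominates the $O(\ve^2)$ size of the curve and of its displacement from $R_\al(1+l\ve)$. Since the s-triangle is defined so that the midpoint of $PQ$ coincides with $R_\al(1+l\ve)$, the curve is centred in the interior of $ACNM$ and stays there for $\ve$ small, uniformly in $l \in [c_0,\, c]$.

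Next I would treat the double compression performed by the third and fourth curves, which are arcs of two parabolas with a common horizontal focal axis and common focus, the fourth being the image of the third under a homothety of small ratio $k$ centred at the focus. Using the reflective property of the parabola --- a ray parallel to the axis passes through the focus after reflection, and conversely --- the horizontal flow of width $O(\ve^2)$ is reflected by the third curve into a pencil converging to the common focus and then by the fourth curve back into a horizontal flow, still in the direction $v_0$ and now of width $k \cdot O(\ve^2)$. Taking $k$ arbitrarily small makes this thin flow as narrow as desired while keeping it within the $O(\ve^2)$ bound.

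Finally I would fix the vertical position of the thin flow: placing the common focus at the midpoint of the segment $B_{\tau_1}B_{\tau_2}$ and choosing $k$ small enough forces the outgoing thin flow to pass between $B_{\tau_1}$ and $B_{\tau_2}$ and simultaneously keeps the smaller curvilinear domain inside the s-triangle, exactly as in the definition of the s-set. The associated s-triangle $\triangle_{\ve,r,\al,l}$ is determined solely by the midpoint condition on $PQ$, hence depends only on $\ve, r, \al, l$ and not on $(\tau_1,\, \tau_2)$ or $v_0$; the case $v_0 = (-1,0)$ is recovered from $v_0 = (1,0)$ by the reflection symmetry already recorded in the proof of Lemma \ref{l2}. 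The one genuinely non-formal point, and the step I would treat most carefully, is the geometric containment just described: confirming that the converging parabolic pencil and the fourth curve fit inside triangle $MNB$ without the reflected rays leaving the s-triangle or re-striking an earlier curve. Everything else is bookkeeping, since the quantitative content is already supplied by Lemmas \ref{l1} and \ref{l2}.
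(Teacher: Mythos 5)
Your proposal is correct and follows essentially the same route as the paper, which states Lemma~\ref{l3} only as a summary of the construction preceding it: Lemma~\ref{l2} supplies the second reflecting curve sitting inside the trapezoid $ACNM$ by the $\ve^{3/2}$ versus $\ve^2$ scale separation, the confocal parabola arcs with common focus at the midpoint of $B_{\tau_1}B_{\tau_2}$ perform the further compression, and $\triangle_{\ve,r,\al,l}$ is independent of $\tau_1,\,\tau_2,\,v_0$ by its very definition. The containment checks you single out as the delicate step are precisely the points the paper itself treats only informally, so your argument matches the paper's level of detail and its approach.
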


\begin{figure}
\begin{picture}(0,130)
\scalebox{2.4}{
\rput(3,0.15){
\rput(1.35,0){
\psline[linecolor=red,linewidth=0.2pt,arrows=->,arrowscale=1.5](-4.86,1.5)(-4.12,1)
\psline[linecolor=red,linewidth=0.2pt](-4.86,1.5)(-2.55,-0.05)(-0.3,0.92)
\psline[linecolor=red,linewidth=0.2pt,arrows=->,arrowscale=1.5](-4.6,1.5)(-3.87,1)
\psline[linecolor=red,linewidth=0.2pt](-4.6,1.5)(-2.22,-0.12)(-0.3,0.92)
\psline[linecolor=red,linewidth=0.2pt,arrows=->,arrowscale=1.5](-4.18,1.5)(-3.43,1)
\psline[linecolor=red,linewidth=0.2pt](-4.18,1.5)(-1.75,-0.1)(-0.3,0.92)
\psline[linecolor=red,linewidth=0.2pt,arrows=->,arrowscale=1.5](-3.8,1.5)(-3.05,1)
\psline[linecolor=red,linewidth=0.2pt](-3.8,1.5)(-1.486,-0.03)(-0.3,0.92)
     \pspolygon[linewidth=0pt,linecolor=white,fillstyle=solid,fillcolor=white](-0.2,0.8)(-0.2,1)(-0.41,1)(-0.31,0.8)
\rput(-0.6,1.1){
\scalebox{7}{
\pspolygon[linewidth=0pt,linecolor=white,fillstyle=solid,fillcolor=lightgray](0,-0.05)(0.086,0)(-0.086,0)
\psline[linewidth=0.08pt](0.086,0)(-0.086,0)
\psline[linewidth=0.01pt](0.086,0)(0,-0.05)(-0.086,0)}
}
\psline[linecolor=red,linewidth=0.2pt,arrows=->,arrowscale=1.25](-0.1,0.92)(1.3,0.92)
\psline[linecolor=red,linewidth=0.2pt,arrows=->,arrowscale=1.25](-0.1,0.82)(1.3,0.82)
\psline[linecolor=red,linewidth=0.08pt](-0.1,0.853)(1.2,0.853)
\psline[linecolor=red,linewidth=0.08pt](-0.1,0.887)(1.2,0.887)
\psdots[dotsize=1pt](-0.1,0.82)(-0.1,0.92)
\rput(0,0.71){\scalebox{0.4}{$\tau_1$}}
\rput(-0.21,0.98){\scalebox{0.4}{$\tau_2$}}
}
\rput(1.3,1.3){\scalebox{0.4}{T}}
\psarc[linewidth=0.55pt](-0.62,1.5){1.65}{-115}{-65}
}
}
\end{picture}
\caption{A general scheme of reflection from an s-set. Only the s-triangle $T$ containing this set is shown, and three reflections from the s-set are not seen here. The points $B_{\tau_1}$ and $B_{\tau_2}$ are substituted with $\tau_1$ and $\tau_2$.}
\label{fig:l3}
\end{figure}
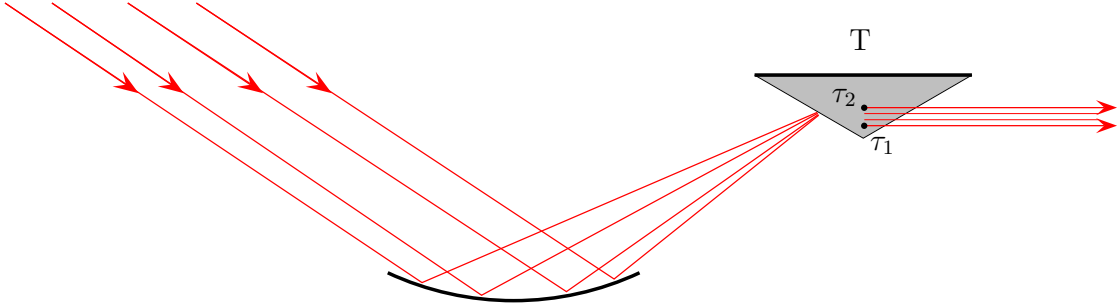

Now fix $r$ and take two complementary angles of incidence $\al$ and $\pi - \al$\, ($0 < \al < \pi/2$).
   %First we consider the case $\al \ne \pi/2$.
Consider two parallel flows incident on the infinite periodic sequence of hollows
\beq\label{sequence of hollows}
y = -\sqrt{r^2 - (x - 2m\ve)^2}, \ 2m\ve - \ve \le x \le 2m\ve + \ve, \ m \in \ZZZ
\eeq
at the angles $\al$ and $\pi - \al$. Fix the parameter $l$; then we have two periodic sequences of s-triangles at the same height $R_\al(1 + l\ve) = R_\al(1) + O(\ve)$,
\beqo\label{s-tr}
m_1(2\ve, 0) + \triangle_{\ve,r,\al,l}, \ \, m_1 \in \ZZZ \quad \text{and} \quad m_2(2\ve, 0) + \triangle_{\ve,r,\pi-\al,l},\, \ m_2 \in \ZZZ.
\eeqo
Let us show that for a certain $l = l(\ve)$ these triangles do not overlap.

Introduce the {\it axes} of the portions of the two flows reflected by the $m$th hollow. These axes are, by definition, the half-lines
$$m(2\ve, 0) + R_\al(\tau),\, \tau > 0 \quad \text{and} \quad m(2\ve, 0) + R_{\pi-\al}(\tau),\, \tau > 0.$$
     %where $$R_\al(\tau) = (0, -r) + \frac{\tau}{2}\, r\sin\al\, (\cos\al,\, \sin\al).$$
The point of intersection, $m_1(2\ve, 0) + R_\al(1 + l\ve) = m_2(2\ve, 0) + R_{\pi-\al}(1 + l\ve)$, of two axes related to different flows and different hollows corresponds to the value $l$ (identical for the two axes)
$$
l = l_{m}(\ve) = \frac{2m}{r\cos\al\sin\al} - \frac{1}{\ve}, \quad \text{with} \ \, m = m_2 - m_1 \in \ZZZ
$$
(see Fig.~\ref{fig:intersect}).

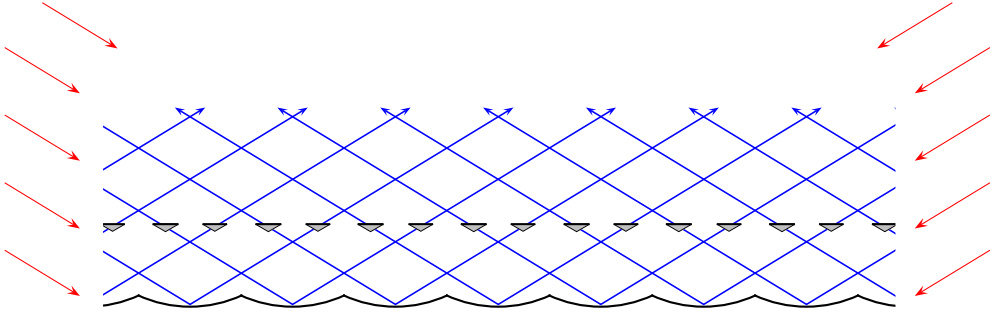
\begin{figure}
\begin{picture}(0,100)
%\scalebox{1}{

\rput(4,0.15){

\multirput(-6,0)(1.366,0){15}{
\psarc[linewidth=0.8pt](0,1.5){1.65}{-115}{-65}
\psline[linewidth=0.6pt,linecolor=blue]{<->}(-4.3,2.5)(0,-0.12)(4.3,2.5)
\rput(1.5,0.95){
\scalebox{1.9}{
\pspolygon[linewidth=0pt,linecolor=white,fillstyle=solid,fillcolor=lightgray](0,-0.05)(0.086,0)(-0.086,0)
\psline[linewidth=0.4pt](0.086,0)(-0.086,0)
\psline[linewidth=0.1pt](0.086,0)(0,-0.05)(-0.086,0)}}
\rput(-1.9,0.95){
\scalebox{2}{
\pspolygon[linewidth=0pt,linecolor=white,fillstyle=solid,fillcolor=lightgray](0,-0.05)(0.086,0)(-0.086,0)
\psline[linewidth=0.4pt](0.086,0)(-0.086,0)
\psline[linewidth=0.1pt](0.086,0)(0,-0.05)(-0.086,0)}}
}
\rput(-4.8,-0.1){
\pspolygon[linewidth=0pt,linecolor=white,fillstyle=solid,fillcolor=white](-3,-0.1)(3.1,-0.1)(3.1,2.7)(-3,2.7)}
\rput(12,-0.1){
\pspolygon[linewidth=0pt,linecolor=white,fillstyle=solid,fillcolor=white](-3.15,-0.1)(6,-0.1)(6,2.7)(-3.15,2.7)}
%%%%%%%%%%%%%%%%%%%%%%%%%%%%%
\multirput(-3,0.6)(0,0.9){4}{
\psline[linewidth=0.4pt,linecolor=red,arrows=->,arrowscale=1.5](0,0)(1,-0.609)}
\multirput(10.1,0.6)(0,0.9){4}{
\psline[linewidth=0.4pt,linecolor=red,arrows=->,arrowscale=1.5](0,0)(-1,-0.609)
}
\rput(-3,4.2){
\psline[linewidth=0.4pt,linecolor=red,arrows=->,arrowscale=1.5](0.5,-0.305)(1.5,-0.914)}
\rput(10.1,4.2){
\psline[linewidth=0.4pt,linecolor=red,arrows=->,arrowscale=1.5](-0.5,-0.305)(-1.5,-0.914)}

}
%}
\end{picture}
\caption{A periodic sequence of hollows; two incident flows; the axes of the reflected flows; and two disjoint periodic sequences of s-triangles. The points of intersection of the axes correspond to the values $l_m(\ve)$.}
\label{fig:intersect}
\end{figure}

Take $c_0$ as in Lemma \ref{l3} and set $c = c_0 + 2/(r\cos\al\sin\al)$; then one can choose a piecewise continuous function $l(\ve) \in [c_0,\, c]$ so that the values $|l(\ve) - l_m(\ve)|,\, m \in \ZZZ$ are separated from zero by a constant, namely
$$
\min_{m \in \ZZZ} |l(\ve) - l_{m}(\ve)| \ge \frac{1}{r\cos\al\sin\al}.
$$
This choice guarantees that the s-triangles in the two periodic sequences
\beq\label{s-tri}
m_1(2\ve, 0) + \triangle_{\ve,r,\al,l(\ve)}, \ \, m_1 \in \ZZZ \quad \text{and} \quad m_2(2\ve, 0) + \triangle_{\ve,r,\pi-\al,l(\ve)},\, \ m_2 \in \ZZZ
\eeq
are disjoint for $\ve$ sufficiently small (see Fig.~\ref{fig:intersect}).

Now enumerate all s-triangles in both sequences according to $m = 1,\, 2,\ldots$ and take a sequence of horizontal vectors $V = \{ v_0^1,\, v_0^2, \ldots \}$ with $v_0^m = (\pm 1, 0)$, then choose an infinite sequence of disjoint intervals $\mathcal{U} = \{ U^m = (\tau_1^m,\, \tau_2^m),\, m = 1,\,2,\ldots \}$ in $[0,\, 1]$, and to the $m$th triangle ($m = 1,\, 2,\ldots$) assign an s-set so as the $m$th reflected thin flow lies in the interval $U^m$ (more precisely, is contained between the corresponding points $B_{\tau_1^m}$ and $B_{\tau_2^m}$ of the $m$th triangle) and has the direction $v_0^m$; see Fig~\ref{fig:unilevel}. The non-overlapping condition guarantees that each particle makes reflections in only one s-triangle.

\begin{figure}
\begin{picture}(0,155)
\scalebox{3.7}{

\rput(1,0.25){

\multirput(-6,0)(1.366,0){15}{
\psarc[linewidth=0.25pt](0,1.5){1.65}{-115}{-65}
\rput(1.5,1){
\scalebox{3}{
\pspolygon[linewidth=0pt,linecolor=white,fillstyle=solid,fillcolor=lightgray](0,0)(0,-0.035)(5,-0.035)(5,0)
\pspolygon[linewidth=0pt,linecolor=white,fillstyle=solid,fillcolor=gray](0.086,0)(0.043,-0.035)(-0.043,-0.035)(-0.086,0) %(0,-0.07)
\pspolygon[linewidth=0.005pt](0.086,0)(-0.086,0)(0,-0.07)
}}
\rput(-1.9,1){
\scalebox{3}{
\pspolygon[linewidth=0pt,linecolor=white,fillstyle=solid,fillcolor=gray](0.086,0)(0.043,-0.035)(-0.043,-0.035)(-0.086,0) %(0,-0.07)
\pspolygon[linewidth=0.005pt](0.086,0)(-0.086,0)(0,-0.07)
}}
}
\rput(-1.75,0.23){\psline[linewidth=0.15pt,linecolor=red,arrows=->,arrowscale=0.8](0,0)(1,-0.35)
\psline[linewidth=0.15pt,linecolor=red,arrows=->,arrowscale=0.8](1,-0.35)(2.025,0.16)
\psline[linewidth=0.15pt,linecolor=red](2.025,0.16)(3.05,0.67)(3.04,0.64)
}
\rput(-1.17,0.23){\psline[linewidth=0.15pt,linecolor=red,arrows=->,arrowscale=0.8](0,0)(1,-0.35)
\psline[linewidth=0.15pt,linecolor=red,arrows=->,arrowscale=0.8](1,-0.35)(1.725,0.16)
\psline[linewidth=0.15pt,linecolor=red](1.725,0.16)(2.45,0.67)(2.45,0.64)
}
\rput(-1.45,0.205){\psline[linewidth=0.15pt,linecolor=red,arrows=->,arrowscale=0.8](0,0)(1,-0.35)
\psline[linewidth=0.15pt,linecolor=red,arrows=->,arrowscale=0.8](1,-0.35)(1.865,0.1675)
\psline[linewidth=0.15pt,linecolor=red,arrows=->,arrowscale=0.8](1.865,0.1675)(2.73,0.685)
}
%%%%%%%%%%%%%%%%%%%%%%%%%%%%%%%%%%%%%%%%%%%%%%%%%%%%%
\rput(1.79,0.02){
\rput(1.75,0.23){\psline[linewidth=0.15pt,linecolor=red,arrows=->,arrowscale=0.8](0,0)(-1,-0.35)
\psline[linewidth=0.15pt,linecolor=red,arrows=->,arrowscale=0.8](-1,-0.35)(-2.025,0.16)
\psline[linewidth=0.15pt,linecolor=red](-2.025,0.16)(-3.05,0.65)(-3.04,0.62)
}
\rput(1.17,0.215){\psline[linewidth=0.15pt,linecolor=red,arrows=->,arrowscale=0.8](0,0)(-1,-0.35)
\psline[linewidth=0.15pt,linecolor=red,arrows=->,arrowscale=0.8](-1,-0.35)(-1.725,0.16)
\psline[linewidth=0.15pt,linecolor=red](-1.725,0.16)(-2.45,0.65)(-2.45,0.62)
}
\rput(1.45,0.2){\psline[linewidth=0.15pt,linecolor=red,arrows=->,arrowscale=0.8](0,0)(-1,-0.35)
\psline[linewidth=0.15pt,linecolor=red,arrows=->,arrowscale=0.8](-1,-0.35)(-1.865,0.1675)
\psline[linewidth=0.15pt,linecolor=red,arrows=->,arrowscale=0.8](-1.865,0.1675)(-2.73,0.67)
}
}
%%%%%%%%%%%%%%%%%%%%%%%%%%%%%%%%%%%%%%%%%%%%%%%%%%%%%
\rput(0.55,0.815){
\pspolygon[linewidth=0pt,linecolor=white,fillstyle=solid,fillcolor=lightgray](-10,0)(-10,0.02)(5,0.02)(5,0) %(0,0)(0,0.02)(5,0.02)(5,0)
\psline[linewidth=0.3pt,linecolor=red,arrows=->,arrowscale=0.8](0,0.01)(-0.3,0.01)
\psline[linewidth=0.3pt,linecolor=red,arrows=->,arrowscale=0.8](-0.3,0.01)(-1.6,0.01)
}
\rput(1.21,0.855){
\pspolygon[linewidth=0pt,linecolor=white,fillstyle=solid,fillcolor=lightgray](-10,0)(-10,0.02)(5,0.02)(5,0) %(0,0)(0,0.02)(5,0.02)(5,0)
\psline[linewidth=0.3pt,linecolor=red,arrows=->,arrowscale=0.8](0,0.01)(0.3,0.01)
\psline[linewidth=0.3pt,linecolor=red,arrows=->,arrowscale=0.8](0.3,0.01)(1.6,0.01)
}
\rput(-4.8,-0.1){
\pspolygon[linewidth=0pt,linecolor=white,fillstyle=solid,fillcolor=white](-3,-0.1)(3.7,-0.1)(3.7,2.7)(-3,2.7)}
\rput(11.95,-0.1){
\pspolygon[linewidth=0pt,linecolor=white,fillstyle=solid,fillcolor=white](-9.1,-0.1)(5.8,-0.1)(5.8,2.7)(-9.1,2.7)}
%%%%%%%%%%%%%%%%%%%%%%%%%%%%%
}
}

\rput(6.2,4.9){1}
\rput(8.7,4.9){2}
\end{picture}
\caption{A collection of hollows and s-triangles: a schematic representation. Only two portions of incident flows at angles $\al$ and $\pi - \al$ are shown. They are reflected first by the corresponding hollows and then by the s-sets 1 and 2. The resulting thin flows are directed to the right and to the left.}
\label{fig:unilevel}
\end{figure}
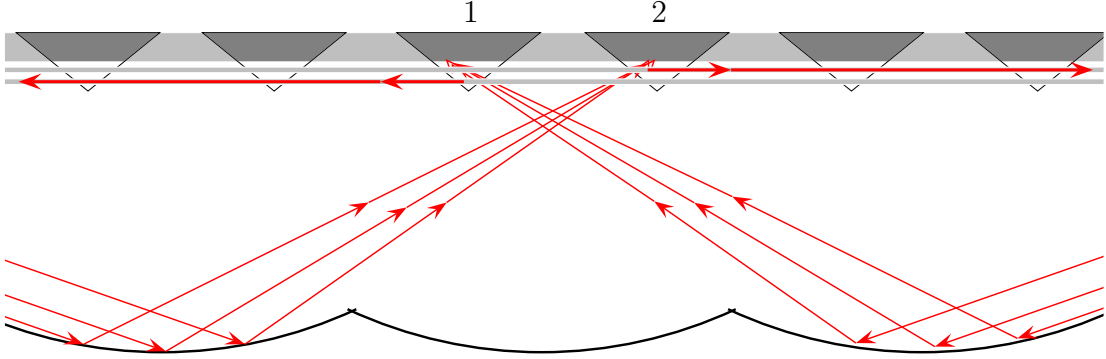

\begin{opr}\rm
The collection of these s-sets and the associated s-triangles (\ref{s-tri}) will be called the $(\ve,r,\al,V,\mathcal{U})$-{\it system of the first kind} (or just an $\al$-{\it system}, if appropriate). In the case $\al = \pi/2$, a $(\pi/2)$-{\it system} is a collection of s-triangles in the periodic sequence
\beq\label{s-tri1}
m(2\ve, 0) + \triangle_{\ve,r,\pi/2,c_0}, \ \, m \in \ZZZ
\eeq
and the corresponding s-sets. Thus, an $\al$-system is defined for all $0 < \al \le \pi/2$.
\end{opr}

Summarizing, for $0 < \al < \pi/2$ we have two sequences of s-sets associated with two $(2\ve, 0)$-periodic sequences of disjoint s-triangles (\ref{s-tri}) with the size $O(\ve^{3/2}), \, \ve \to 0$ and at the same height $R_\al(1) + O(\ve)$. (Notice that $0 < R_\al(1) \le r/2$). If $\al = \pi/2$, we have only one sequence of s-sets associated with the sequence of s-triangles (\ref{s-tri1}) at the height $r/2$. For an incident particle, with the angle of incidence $\al$ or $\pi - \al$, 3 possibilities can be realized.

(a) It is reflected once by a hollow and thrice by an s-set, and then moves horizontally to the right or to the left. All reflected light rays are organized in (infinitely many) horizontal thin flows.   %, the $m$th thin flow being contained in $U^m$\, $(m = 1,\, 2,\ldots)$.

(b) The particle hits the base of an s-triangle and goes away. The length of each base is $\ve^{3/2}$ and the distance between nearest bases is $O(\ve)$; therefore only a portion $O(\ve^{1/2})$ of the incident flow is reflected this way.

(c) The particle hits a vertex at the base of an s-triangle or an endpoint of a hollow. There are countably many such particles, and they will be neglected in what follows.

   %The set obtained by horizontal translation of all s-sets is the disjoint union of horizontal strips (that is, sets of the form $\RRR \times J$, where $J$ is a segment). First, the strip obtained by horizontal translation of the trapezoid bounded by the base and the horizontal median of an s-triangle (the trapezoid $ACNM$ in Fig.~\ref{fig:s-triangle1}). Second, the infinite sequence of strips, the $m$th strip being contained in $\RRR \times U^m$,\,  $m = 1,\, 2,\ldots$. Note that the $m$th reflected thin flow lies in the $m$th strip.

\section{Constructing a system of multilevel reflecting sets}

Consider a line segment on the boundary of a half-plane and $n$ parallel flows in this half-plane incident on the segment at the angles $\al_j = \frac{2j - 1}{2n} \pi,\, j = 1,\ldots,n$. Take a regular trapezoid in the half-plane with the height $r$ and with the angle $\pi/(2n)$ at the larger base, and such that the smaller base coincides with the segment. Substitute this segment with a finite sequence of hollows (circular arcs outside the half-plane). Take a coordinate system $xOy$ such that the hollows are described by formula (\ref{sequence of hollows}) and the half-plane takes the form $y \ge -\sqrt{r^2 - \ve^2}$, and assume that the length of the segment is a multiple of $2\ve$.
     %Now consider $n$ parallel flows incident on a line segment at the angles $\al_j = \frac{2j - 1}{2n} \pi,\, j = 1,\ldots,n$. Take a regular trapezoid with the height $r$ and with the angle $\pi/(2n)$ at the larger base, and such that the smaller base coincides with the segment. Substitute this segment with a finite sequence of hollows (circular arcs looking outside the trapezoid). Take a coordinate system $xOy$ such that the hollows are described by formula (\ref{sequence of hollows}) and the trapezoid lies in the half-plane $y \ge -\sqrt{r^2 - \ve^2}$, and assume that the length of the segment is a multiple of $2\ve$.

Take several sequences of horizontal vectors $V_j$ and sequences of disjoint intervals $\mathcal{U}_j$,  %(to be specified below)
and consider the $(\ve,r,\al_j,V_j,\mathcal{U}_j)$-systems $(j = 1,\ldots,\lfloor\frac{n+1}{2}\rfloor)$ defined in the previous subsection (here $\lfloor\cdots\rfloor$ means the integer part). We take only those s-sets and s-triangles of the systems that correspond to the given finite sequence of hollows covering the segment (for a schematic representation of these s-triangles see Fig.~\ref{fig:block}). Note that the s-triangles lie on heights $\le r/2$ and in the $O(\ve^{3/2})$-neighborhood of the axes of reflected flows; on the other hand, the axes form angles $\ge \pi/(2n)$ with the segment, and therefore, their $O(\ve)$-neighborhoods (at the heights $< r$) lie in the trapezoid. This implies that the s-triangles under the consideration also lie in the trapezoid.

   %Note that s-triangles of different systems $(1 \le j_1 < j_2 \le \lfloor\frac{n+1}{2}\rfloor))$ lie on different heights (the triangles of the $j_2$th system higher than those of the $j_1$th system) and therefore (for $\ve$ sufficiently small) do not intersect.

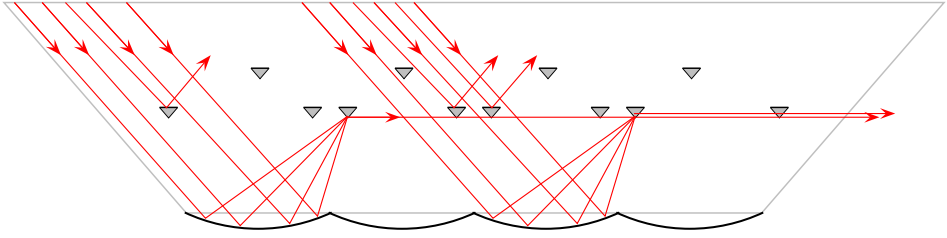
\begin{figure}
\begin{picture}(0,110)
\scalebox{1.4}{

\rput(5.5,0.25){

\pspolygon[linewidth=0.4pt,linecolor=lightgray](2.732,0)(4.464,2)(-4.464,2)(-2.732,0)
\psarc[linewidth=0.55pt](0.683,1.5){1.65}{-115}{-65}
\psarc[linewidth=0.55pt](2.049,1.5){1.65}{-115}{-65}
\psarc[linewidth=0.55pt](-0.683,1.5){1.65}{-115}{-65}
\psarc[linewidth=0.55pt](-2.049,1.5){1.65}{-115}{-65}

 %\psline[linewidth=0.3pt](2.049,0)(3.781,2)
 %\psline[linewidth=0.3pt](0.683,0)(2.415,2)
 %\psline[linewidth=0.3pt](-0.683,0)(1.049,2)
 %\psline[linewidth=0.3pt](-2.049,0)(-0.317,2)

\multirput(-1.2,1)(1.366,0){4}
{\pspolygon[linewidth=0pt,linecolor=white,fillstyle=solid,fillcolor=lightgray](0,-0.1)(0.086,0)(-0.086,0)
\psline[linewidth=0.4pt](0.086,0)(-0.086,0)
\psline[linewidth=0.1pt](0.086,0)(0,-0.1)(-0.086,0)}

\multirput(1.2,1)(-1.366,0){4}
{\pspolygon[linewidth=0pt,linecolor=white,fillstyle=solid,fillcolor=lightgray](0,-0.1)(0.086,0)(-0.086,0)
\psline[linewidth=0.4pt](0.086,0)(-0.086,0)
\psline[linewidth=0.1pt](0.086,0)(0,-0.1)(-0.086,0)}

\multirput(-0.07,0)(2.732,0){2}{
\psline[linecolor=red,linewidth=0.3pt,arrows=->,arrowscale=1.5](-4.364,2)(-3.922,1.5)
\psline[linecolor=red,linewidth=0.3pt](-4.364,2)(-2.55,-0.05)(-1.2,0.92)
\psline[linecolor=red,linewidth=0.3pt,arrows=->,arrowscale=1.5](-4.1,2)(-3.655,1.5)
\psline[linecolor=red,linewidth=0.3pt](-4.1,2)(-2.22,-0.12)(-1.2,0.92)
         \psline[linecolor=red,linewidth=0.3pt,arrows=->,arrowscale=1.5](-3.88,2)(-2.92,1)(-2.5,1.5)
         %\psline[linecolor=red,linewidth=0.3pt,arrows=->,arrowscale=1.5](-3.48,2)(-2.52,1)(-2.1,1.5)
\psline[linecolor=red,linewidth=0.3pt,arrows=->,arrowscale=1.5](-3.68,2)(-3.222,1.5)
\psline[linecolor=red,linewidth=0.3pt](-3.68,2)(-1.75,-0.1)(-1.2,0.92)
\psline[linecolor=red,linewidth=0.3pt,arrows=->,arrowscale=1.5](-3.3,2)(-2.852,1.5)
\psline[linecolor=red,linewidth=0.3pt](-3.3,2)(-1.486,-0.03)(-1.2,0.92)}

         \psline[linecolor=red,linewidth=0.3pt,arrows=->,arrowscale=1.5](-0.75,2)(0.17,1)(0.6,1.5)

\psline[linecolor=red,linewidth=0.3pt,arrows=->,arrowscale=1.4](-1.2,0.91)(-0.7,0.91)
\psline[linecolor=red,linewidth=0.3pt,arrows=->,arrowscale=1.4](-1.2,0.91)(3.85,0.91)
\psline[linecolor=red,linewidth=0.3pt,arrows=->,arrowscale=1.4](1.52,0.945)(4,0.945)

\multirput(-2.1,1.375)(1.366,0){4}{
\pspolygon[linewidth=0pt,linecolor=white,fillstyle=solid,fillcolor=lightgray](0,-0.1)(0.086,0)(-0.086,0)
\psline[linewidth=0.4pt](0.086,0)(-0.086,0)
\psline[linewidth=0.1pt](0.086,0)(0,-0.1)(-0.086,0)}

}
}
\end{picture}
\caption{A schematic representation of a collection of s-triangles contained in a trapezoid. Only two portions of a flow incident on the hollows are shown. There are two periodic sequences of s-triangles corresponding to inclined flows and a sequence corresponding to the vertical flow ($\al = \pi/2$), with 4 s-triangles in each sequence.}
\label{fig:block}
\end{figure}

The s-triangles of an $\al_j$-system are situated on the height of $\frac r2\, \sin^2 \al_j + O(\ve)$; therefore the triangles of different systems lie on different heights and do not overlap for $\ve$ sufficiently small.

Consider the finite collection of s-triangles and s-sets in all $(\ve,r,\al_j,V_j,\UUU_j)$-systems, $j = 1,\ldots,\lfloor\frac{n+1}{2}\rfloor$ corresponding to the given finite sequence of hollows. It is still not quite satisfactory for our purposes; let us see why.

For an incident particle at an angle $\al_j$, there are three possibilities. (Here and in the sequel we exclude from consideration finitely many particles that make the first reflection at a vertex of the base of an s-triangle or at an endpoint of a hollow.)

(a) It is reflected once by a hollow and thrice by the s-set corresponding to this hollow and to the given incident flow, and then moves horizontally (to the left or to the right).

(b) It hits the base of an s-triangle. (We do not care about what happens afterwards.)

(c) It is reflected by a hollow and then hits an irrelevant s-set (corresponding to a different flow).

The case (c) is undesirable, since the trajectory after hitting an irrelevant set cannot be controlled. To exclude this case, we need to add some more sequences of triangles (let them call {\it false s-triangles}) shielding the triangles of our collection from irrelevant reflected flows (see Fig.~\ref{fig:shield}). All the triangles of the original collection (that is, those that generate s-sets) will be called {\it true s-triangles}, in order to distinguish them from the false ones.

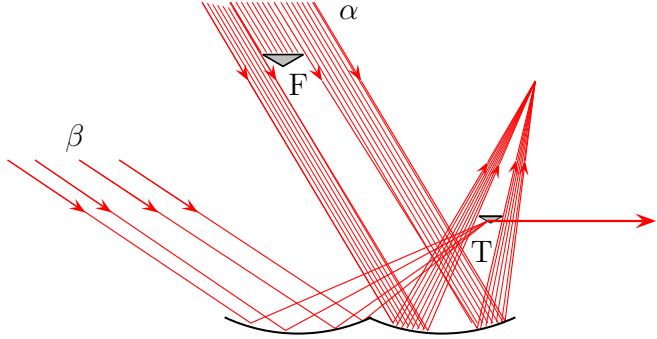
\begin{figure}
\begin{picture}(0,130)
\scalebox{1.4}{

\rput(5.5,0.25){

\rput(0.99,0.965){
\scalebox{1.3}{
\pspolygon[linewidth=0pt,linecolor=white,fillstyle=solid,fillcolor=lightgray](0,-0.05)(0.086,0)(-0.086,0)
\psline[linewidth=0.4pt](0.086,0)(-0.086,0)
\psline[linewidth=0.2pt](0.086,0)(0,-0.05)(-0.086,0)}}

\rput(-1.045,2.5){
\scalebox{2.25}{
\pspolygon[linewidth=0pt,linecolor=white,fillstyle=solid,fillcolor=lightgray](0,-0.05)(0.086,0)(-0.086,0)
\psline[linewidth=0.2pt](0.086,0)(-0.086,0)
\psline[linewidth=0.2pt](0.086,0)(0,-0.05)(-0.086,0)}}

\rput(2.7,0){
\psline[linecolor=red,linewidth=0.3pt,arrows=->,arrowscale=1.5](-4.364,3)(-3.922,2.25)
\psline[linecolor=red,linewidth=0.3pt,arrows=->,arrowscale=1.3](-3.922,2.25)(-2.55,-0.05)(-1.65,1.48)
\psline[linecolor=red,linewidth=0.3pt](-1.65,1.48)(-1.2,2.25)
\psline[linecolor=red,linewidth=0.3pt,arrows=->,arrowscale=1.5](-4.1,3)(-3.655,2.25)
\psline[linecolor=red,linewidth=0.3pt,arrows=->,arrowscale=1.3](-3.655,2.25)(-2.22,-0.12)(-1.54,1.46)
\psline[linecolor=red,linewidth=0.3pt](-1.54,1.46)(-1.2,2.25)
\psline[linecolor=red,linewidth=0.3pt,arrows=->,arrowscale=1.5](-3.68,3)(-3.222,2.25)
\psline[linecolor=red,linewidth=0.3pt,arrows=->,arrowscale=1.3](-3.222,2.25)(-1.75,-0.1)(-1.38,1.5)
\psline[linecolor=red,linewidth=0.3pt](-1.38,1.5)(-1.2,2.25)
\psline[linecolor=red,linewidth=0.3pt,arrows=->,arrowscale=1.5](-3.3,3)(-2.852,2.25)
\psline[linecolor=red,linewidth=0.3pt,arrows=->,arrowscale=1.3](-2.852,2.25)(-1.486,-0.03)(-1.295,1.49)
\psline[linecolor=red,linewidth=0.3pt](-1.295,1.49)(-1.2,2.25)}
\rput(-0.68,2.22){\scalebox{0.7}{F}}
\rput(1.06,0.63){\scalebox{0.7}{T}}
\rput(-0.2,2.9){\scalebox{0.7}{$\al$}}
\rput(-2.8,1.7){\scalebox{0.7}{$\bt$}}

\multirput(2.71,0)(0.05,0){8}{
\psline[linecolor=red,linewidth=0.05pt](-4.1,3)(-3.81,2.52)}

\multirput(3.45,0)(0.05,0){7}{
\psline[linecolor=red,linewidth=0.05pt](-4.364,3)(-2.55,-0.05)}

\multirput(2.7,0)(0.05,-0.01){7}{
\psline[linecolor=red,linewidth=0.05pt](-4.364,3)(-2.55,-0.05)}

\psline[linecolor=red,linewidth=0.05pt](0.25,-0.08)(1.57,2.25)
\psline[linecolor=red,linewidth=0.05pt](0.3,-0.1)(1.57,2.25)
\psline[linecolor=red,linewidth=0.05pt](0.35,-0.1)(1.57,2.25)
\psline[linecolor=red,linewidth=0.05pt](0.4,-0.11)(1.57,2.25)
\psline[linecolor=red,linewidth=0.05pt](0.45,-0.11)(1.57,2.25)
\psline[linecolor=red,linewidth=0.05pt](0.5,-0.12)(1.57,2.25)

\psline[linecolor=red,linewidth=0.05pt](1.05,-0.1)(1.57,2.25)
\psline[linecolor=red,linewidth=0.05pt](1.1,-0.08)(1.57,2.25)
\psline[linecolor=red,linewidth=0.05pt](1.15,-0.06)(1.57,2.25)
\psline[linecolor=red,linewidth=0.05pt](1.2,-0.04)(1.57,2.25)
\psline[linecolor=red,linewidth=0.05pt](1.25,-0.04)(1.57,2.25)

\rput(1.35,0){
\psline[linecolor=red,linewidth=0.3pt,arrows=->,arrowscale=1.5](-4.86,1.5)(-4.12,1)
\psline[linecolor=red,linewidth=0.3pt](-4.86,1.5)(-2.55,-0.05)(-0.3,0.92)
\psline[linecolor=red,linewidth=0.3pt,arrows=->,arrowscale=1.5](-4.6,1.5)(-3.87,1)
\psline[linecolor=red,linewidth=0.3pt](-4.6,1.5)(-2.22,-0.12)(-0.3,0.92)
\psline[linecolor=red,linewidth=0.3pt,arrows=->,arrowscale=1.5](-4.18,1.5)(-3.43,1)
\psline[linecolor=red,linewidth=0.3pt](-4.18,1.5)(-1.75,-0.1)(-0.3,0.92)
\psline[linecolor=red,linewidth=0.3pt,arrows=->,arrowscale=1.5](-3.8,1.5)(-3.05,1)
\psline[linecolor=red,linewidth=0.3pt](-3.8,1.5)(-1.486,-0.03)(-0.3,0.92)
\psline[linecolor=red,linewidth=0.6pt,arrows=->,arrowscale=1.5](-0.3,0.92)(1.3,0.92)
}

\psarc[linewidth=0.55pt](0.683,1.5){1.65}{-115}{-65}
\psarc[linewidth=0.55pt](-0.683,1.5){1.65}{-115}{-65}
}
}
\end{picture}
\caption{Two incident flows and two s-triangles are shown. The true s-triangle T is related to the $\bt$-flow, and is shielded by the false s-triangle F from the $\al$-flow.}
\label{fig:shield}
\end{figure}

The false s-triangles are also isosceles, with the horizontal base and the angle $\pi/(2n)$ at the base, and with the apex turned downward. They should be placed higher than the true ones, that is, on a height in the interval $(r/2,\, r)$. We shall take the height of their bases to be $3r/4$. With all these restrictions, we take a false s-triangle to be the smallest triangle that shields a fixed (true) s-triangle from a fixed piece of flow. This choice uniquely defines all false triangles.  %Their size is $O(\ve^{3/2})$.

Note that the portion of the reflected flow corresponding to the angle of incidence $\al_j$ and to a certain hollow can hit at most two triangles of an irrelevant $\al_i$-system, provided that $\frac{n+1}{2} - i > |\frac{n+1}{2} - j|$, and does not hit triangles of this system otherwise. Indeed, this inequality ensures that the triangles of the $\al_i$-system are at a lower height than the triangles of the $\al_j$-system (and in particular $\al_i < \pi/2$). Since the $\al_i$-system is composed of two periodic sequences with the period $2\ve$ and the width of the reflected flow is smaller than $2\ve$, only one triangle of each sequence can be hit.

Thus, the number of false s-triangles on the way of a portion of the $j$th flow does not exceed $n - 1 - |n + 1 - 2j|$ (and therefore is zero for the flows incident at the smallest angles $\pi/(2n)$ and $(2n - 1)\pi/(2n)$). Any such false triangle is a representative of a periodic sequence of false triangles; the number of sequences does not exceed $\lfloor\frac{n+1}{2}\rfloor (\lfloor\frac{n+1}{2}\rfloor - 1)$. Thus, the total number of false triangles is $O(1/\ve)$. Their size is $O(\ve^{3/2})$, and they may intersect each other (but do not intersect true s-triangles).

\begin{opr}\rm
The collection of sets including the finite sequence of hollows resting on the given line segment, the s-sets of all $(\ve,r,\al_j,V_j,\UUU_j)$-systems $(j = 1,\ldots,\lfloor\frac{n+1}{2}\rfloor)$ related to this sequence of hollows, as well as the associated true s-triangles, and the false s-triangles, is called the $[\ve,r,\bbb,V,\UUU]$-{\it system of the second kind induced by the trapezoid}. Here $\bbb$ is the length of the segment, and the collection $V\, (\UUU)$ is composed of all sequences of vectors $V_j$ (sequences of intervals $\UUU_j$).
\end{opr}

Recall that the segment coincides with the smaller base of the trapezoid and its length $b$ is supposed to be a multiple of $2\ve$.

   %A collection of sets isometric to an $[\ve,r,\bbb,V,\UUU]$-system will also be called an $[\ve,r,\bbb,V,\UUU]$-system.

It follows from the construction that a system of the second kind satisfies the following properties.

$\bullet$ The collection of (true and false) s-triangles in the system does not depend on the choice of $V$ and $\UUU$.

$\bullet$ The true s-triangles corresponding to the flow incident at a fixed angle $\al_j$ form a finite $(2\ve,0)$-periodic sequence, and the sequences corresponding to $\al_j$ and $\pi - \al_j$ are mutually symmetric with respect to the perpendicular bisector of the segment (and in particular, the sequence of triangles corresponding to $\pi/2$ is itself symmetric with respect to this bisector). The set of false triangles is also symmetric with respect to the bisector.

$\bullet$ For the incident particles only the cases (a) and (b) can be realized.
Only a part $O(\ve^{1/2})$ of the incident flows satisfies (b).

\section{Constructing a polygonal system of reflecting sets}

In this and the next section we consider $2n$ parallel flows with the directions being the external bisectors of the angles of $P_1$.

Let us give some more definitions. Let $O$ be the center of the polygon $P_1$. The image of $\pl P_1$ under a dilation centered at $O$ is called a {\it polygonal contour}. A domain bounded by two polygonal contours is called a {\it polygonal ring}, or a {\it p-ring}. These contours are called the {\it inner} and {\it outer boundaries} of the p-ring.

Consider the rhombus ($ABCD$ in Fig.~\ref{fig:v-set}\,(a)) bounded by two adjacent sides of the outer polygon of the p-ring and by the extensions of the corresponding sides of the inner polygon. Divide it into two triangles by the larger diagonal, and apply the dilation with the ratio 3 to the outer one, the center of the dilation being the center of the rhombus. The resulting triangle (shaded in Fig.~\ref{fig:v-set}\,(a)) is called a {\it v-triangle} ("v"{} from vertex) associated with the p-ring.

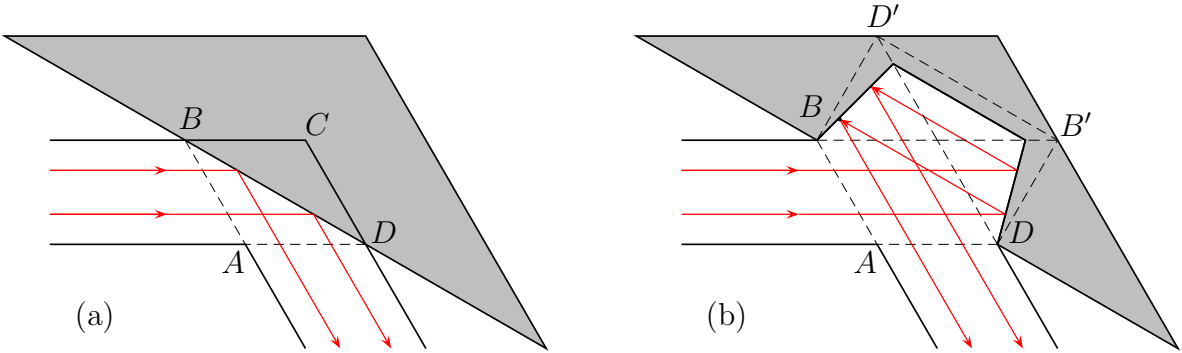
\begin{figure}
\begin{picture}(0,120)
\scalebox{0.8}{
\rput(9.5,1.7){

\rput(-5.5,0){
\pspolygon[fillstyle=solid,fillcolor=lightgray](-4,3.464)(2,3.464)(5,-1.732)
\psline(-3.25,0)(0,0)(1,-1.732)
\psline(-3.25,1.732)(1,1.732)(3,-1.732)
\psline[linewidth=0.4pt,linestyle=dashed](-1,1.732)(0,0)(2,0)
\psline[linecolor=red,linewidth=0.6pt,arrows=->,arrowscale=1.6](-3.25,1.23)(-1.3,1.23)
\psline[linecolor=red,linewidth=0.6pt,arrows=->,arrowscale=1.6](-3.25,0.5)(-1.3,0.5)
\psline[linecolor=red,linewidth=0.6pt,arrows=->,arrowscale=1.6](-3.25,1.23)(-0.134,1.23)(1.577,-1.732)
\psline[linecolor=red,linewidth=0.6pt,arrows=->,arrowscale=1.6](-3.25,0.5)(1.134,0.5)(2.423,-1.732)
\rput(-0.2,-0.3){\scalebox{1.25}{$A$}}
\rput(-0.9,2.1){\scalebox{1.25}{$B$}}
\rput(1.2,2){\scalebox{1.25}{$C$}}
\rput(2.3,0.2){\scalebox{1.25}{$D$}}
\rput(-2.5,-1.2){\scalebox{1.25}{(a)}}
}

\rput(5,0){
\pspolygon[fillstyle=solid,fillcolor=lightgray]
(-4,3.464)(2,3.464)(5,-1.732)(2,0)(2.464,1.732)(0.268,3)(-1,1.732)
\psline(-3.25,0)(0,0)(1,-1.732)
\psline(-3.25,1.732)(-1,1.732)
\psline[linewidth=0.4pt,linestyle=dashed](-1,1.732)(1,1.732)(2,0)
\psline(2,0)(3,-1.732)
\psline[linewidth=0.4pt,linestyle=dashed](-1,1.732)(0,0)(2,0)
\psline[linecolor=red,linewidth=0.6pt,arrows=->,arrowscale=1.6](-3.25,1.23)(-1.3,1.23)
\psline[linecolor=red,linewidth=0.6pt,arrows=->,arrowscale=1.6](-3.25,0.5)(-1.3,0.5)
\psline[linecolor=red,linewidth=0.6pt,arrows=->,arrowscale=1.6](-1.3,0.5)(2.13,0.5)(-0.62,2.08)
\psline[linecolor=red,linewidth=0.6pt,arrows=->,arrowscale=1.6](-3.25,1.23)(2.33,1.23)(-0.1,2.63)
\psline[linecolor=red,linewidth=0.6pt,arrows=->,arrowscale=1.6](-0.62,2.08)(-0.134,1.23)(1.577,-1.732)

\psline[linecolor=red,linewidth=0.6pt,arrows=->,arrowscale=1.6](-0.1,2.63)(2.423,-1.732)
%%%%%%%%%%%%%%%%%%%%%%%%%%%%%%%%%%%%%%%%%%%%%%%%%%%%%%%%%%%%%%%%%%%%%%%%%%%%
\psline[linewidth=0.4pt,linestyle=dashed](-1,1.732)(0,3.464)(3,1.732)(2,0)
\psline(2,0)(2.464,1.732)(0.268,3)(-1,1.732)
\psline[linewidth=0.4pt,linestyle=dashed](1,1.732)(3,1.732)
\psline[linewidth=0.4pt,linestyle=dashed](1,1.732)(0,3.464)
\rput(-0.2,-0.3){\scalebox{1.25}{$A$}}
\rput(-1.1,2.3){\scalebox{1.25}{$B$}}
\rput(2.4,0.2){\scalebox{1.25}{$D$}}
\rput(0.1,3.8){\scalebox{1.25}{$D'$}}
\rput(3.3,2){\scalebox{1.25}{$B'$}}
\rput(-2.5,-1.2){\scalebox{1.25}{(b)}}
\psdots[dotsize=2pt](-0.62,2.08)
}

}
}
\end{picture}
\caption{A v-triangle and a v-set are shown in figures (a) and (b). Reflection of the flow from a v-set preserves the order of light rays, as shown in figure (b).}
\label{fig:v-set}
\end{figure}

In Figure~\ref{fig:v-set}\,(b), the segments $BB'$ and $DD'$ are extensions of sides of the outer polygon, and the segments $BD'$ and $DB'$ are obviously orthogonal to $BD$. Draw a line through $B$ until the intersection with $DD'$, and a line through $D$ until the intersection with $BB'$, both lines making the angle $\pi/(4n)$ with $BD'$, clockwise and counterclockwise. Joining the points of intersection, as a result we obtain a closed hexagonal broken line. The domain bounded by this line is called a {\it v-set associated with this polygonal ring} (shaded in Fig.~\ref{fig:v-set}\,(b)).

Now consider the union of all v-sets associated with a given p-ring, and consider a particle traveling in the p-ring with velocity parallel to one of its sides. When hitting a v-set, the particle is reflected according to the billiard law (see Fig.~\ref{fig:v-set}\,(b)). One easily sees that the trajectory of the particle is closed and is symmetric with respect to all lines of symmetry of $P_1$.

Take a polygonal contour and a finite collection of disjoint closed line segments $\{ \SSS_\iii \}$ on the contour symmetric with respect to all lines of symmetry of $P_1$. Fix $\ve$ and $r$, and for each $\iii$ take the regular trapezoid (outside the contour) whose smaller base coincides with the $\iii$th segment, $\SSS_\iii$, and with the height $r$ and the angle $\pi/(2n)$ at the larger base (see Fig.~\ref{fig:SegmTrap}).
We choose $r$ small enough, so that the trapezoids are disjoint.
For each $\iii$ take the $[\ve,r,\bbb_{\iii},V_{\iii},\UUU_{\iii}]$-system associated with the $\iii$th trapezoid, where $\bbb_\iii$ is the length of the segment $\SSS_\iii$, and $V_\iii$ and $\UUU_\iii$ will be specified below. We assume that all $\bbb_\iii$ are multiples of $2\ve$.
The collection of s-triangles in all these systems is symmetric with respect to all lines of symmetry of $P_1$.
Substitute each segment $\SSS_\iii$ with the corresponding finite sequence of hollows.

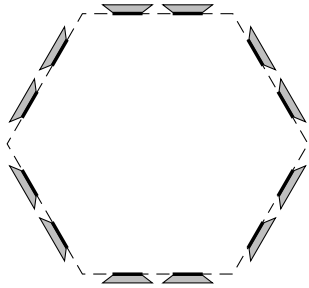
\begin{figure}
\begin{picture}(0,99)
\scalebox{1}{
\rput(7.5,1.7){

\rput{60}(0,0){\pspolygon[linewidth=0.4pt,fillstyle=solid,fillcolor=lightgray](0.6,-1.75)(0.2,-1.75)(0.07,-1.85)(0.73,-1.85)
\pspolygon[linewidth=0.4pt,fillstyle=solid,fillcolor=lightgray](-0.6,-1.75)(-0.2,-1.75)(-0.07,-1.85)(-0.73,-1.85)}
%**************
\rput{-60}(0,0){\pspolygon[linewidth=0.4pt,fillstyle=solid,fillcolor=lightgray](0.6,-1.75)(0.2,-1.75)(0.07,-1.85)(0.73,-1.85)
\pspolygon[linewidth=0.4pt,fillstyle=solid,fillcolor=lightgray](-0.6,-1.75)(-0.2,-1.75)(-0.07,-1.85)(-0.73,-1.85)}
%**************
\rput{60}(0,0){\pspolygon[linewidth=0.4pt,fillstyle=solid,fillcolor=lightgray](0.6,1.75)(0.2,1.75)(0.07,1.85)(0.73,1.85)
\pspolygon[linewidth=0.4pt,fillstyle=solid,fillcolor=lightgray](-0.6,1.75)(-0.2,1.75)(-0.07,1.85)(-0.73,1.85)}
%**************
\rput{-60}(0,0){\pspolygon[linewidth=0.4pt,fillstyle=solid,fillcolor=lightgray](0.6,1.75)(0.2,1.75)(0.07,1.85)(0.73,1.85)
\pspolygon[linewidth=0.4pt,fillstyle=solid,fillcolor=lightgray](-0.6,1.75)(-0.2,1.75)(-0.07,1.85)(-0.73,1.85)}
%**************
\pspolygon[linewidth=0.4pt,fillstyle=solid,fillcolor=lightgray](0.6,1.75)(0.2,1.75)(0.07,1.85)(0.73,1.85)
\pspolygon[linewidth=0.4pt,fillstyle=solid,fillcolor=lightgray](-0.6,1.75)(-0.2,1.75)(-0.07,1.85)(-0.73,1.85)
%**************
\pspolygon[linewidth=0.4pt,fillstyle=solid,fillcolor=lightgray](0.6,-1.75)(0.2,-1.75)(0.07,-1.85)(0.73,-1.85)
\pspolygon[linewidth=0.4pt,fillstyle=solid,fillcolor=lightgray](-0.6,-1.75)(-0.2,-1.75)(-0.07,-1.85)(-0.73,-1.85)
%*************************************************
\pspolygon[linewidth=0.4pt,linestyle=dashed](2,0)(1,1.732)(-1,1.732)(-2,0)(-1,-1.732)(1,-1.732)
\psline[linewidth=1.2pt](-0.6,1.732)(-0.2,1.732)
\psline[linewidth=1.2pt](0.6,1.732)(0.2,1.732)
\psline[linewidth=1.2pt](-0.6,-1.732)(-0.2,-1.732)
\psline[linewidth=1.2pt](0.6,-1.732)(0.2,-1.732)
%*************************************************
\psline[linewidth=1.2pt](1.8,0.3464)(1.6,0.6928)
\psline[linewidth=1.2pt](1.4,1.0392)(1.2,1.3856)
\psline[linewidth=1.2pt](1.8,-0.3464)(1.6,-0.6928)
\psline[linewidth=1.2pt](1.4,-1.0392)(1.2,-1.3856)
\psline[linewidth=1.2pt](-1.8,0.3464)(-1.6,0.6928)
\psline[linewidth=1.2pt](-1.4,1.0392)(-1.2,1.3856)
\psline[linewidth=1.2pt](-1.8,-0.3464)(-1.6,-0.6928)
\psline[linewidth=1.2pt](-1.4,-1.0392)(-1.2,-1.3856)
%*************************************************

}
}
\end{picture}
\caption{A finite collection of segments (shown in bold) and the corresponding trapezoids.}
\label{fig:SegmTrap}
\end{figure}

Consider the collection of true s-triangles in these systems. There is a natural pairing of triangles (and correspondingly of s-sets). Namely, each true s-triangle corresponds to a certain parallel flow and a certain hollow. Consider the symmetry with respect to a main diagonal of $P_1$. The triangles corresponding to symmetric hollows and to opposite flows orthogonal to this diagonal form a pair (see Fig.~\ref{fig:pairing}). Note that the triangles in a pair are responsible for flows with complementary angles of incidence, say $\al$ and $\pi - \al$, and therefore are situated at the same height above the contour.
   % corresponding sides of the contour.

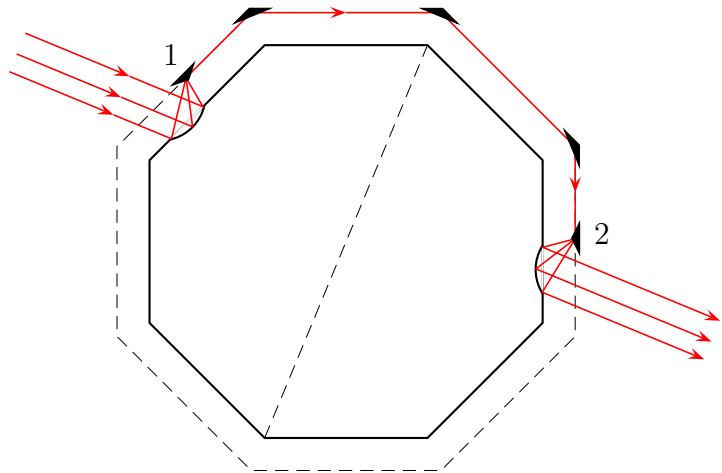
\begin{figure}
\begin{picture}(0,160)

\rput(8,2.75){
\scalebox{1}{

     \rput{-22.5}(0,0){
\pspolygon[linewidth=0.8pt](2.8284,0)(2,2)(0,2.8284)(-2,2)(-2.8284,0)(-2,-2)(0,-2.8284)(2,-2)
\psline[linestyle=dashed,linewidth=0.3pt](0,-2.8284)(0,2.8284)

\psline[linestyle=dashed,linewidth=0.3pt](2.331,2.331)(3.297,0)(2.331,-2.331)(0,-3.297)(-2.331,-2.331)(-3.297,0)(-2.331,2.331)

\psline[linewidth=0.8pt,linecolor=white](2.668,0.375)(2.445,0.93)
\psarc[linewidth=0.8pt](3.025,0.85){0.6}{170}{235}

\psline[linewidth=0.8pt,linecolor=white](-2.668,0.375)(-2.445,0.93)
\psarc[linewidth=0.8pt](-3.025,0.85){0.6}{-55}{10}

\psline[linecolor=red,linewidth=0.6pt,arrows=->,arrowscale=1.6](2.8,1.2)(2.668,0.375)(5,0.375)
\psline[linecolor=red,linewidth=0.6pt,arrows=->,arrowscale=1.6](2.8,1.2)(2.445,0.93)(5,0.93)
\psline[linecolor=red,linewidth=0.6pt,arrows=->,arrowscale=1.6](2.566,1.766)(2.8,1.2)(2.47,0.63)(5,0.63)

\psline[linecolor=red,linewidth=0.6pt,arrows=->,arrowscale=1.6](-5,0.375)(-3.5,0.375)
\psline[linecolor=red,linewidth=0.6pt](-3.5,0.375)(-2.668,0.375)(-2.8,1.2)
\psline[linecolor=red,linewidth=0.6pt,arrows=->,arrowscale=1.6](-5,0.93)(-3.5,0.93)
\psline[linecolor=red,linewidth=0.6pt](-3.5,0.93)(-2.445,0.93)(-2.8,1.2)
\psline[linecolor=red,linewidth=0.6pt,arrows=->,arrowscale=1.6](-5,0.63)(-3.5,0.63)
\psline[linecolor=red,linewidth=0.6pt,arrows=->,arrowscale=1.6](-3.5,0.63)(-2.47,0.63)(-2.8,1.2)(-2.331,2.331)(-1.166,2.814)
\psline[linecolor=red,linewidth=0.6pt,arrows=->,arrowscale=1.6](-1.166,2.814)(0,3.297)(2.331,2.331)(2.566,1.766)

\rput{67.5}(-2.85,1.23){\pspolygon[linewidth=0.6pt,fillstyle=solid,fillcolor=black](0.2,0)(0,-0.1)(-0.2,0)}
\rput{-67.5}(2.85,1.23){\pspolygon[linewidth=0.6pt,fillstyle=solid,fillcolor=black](0.2,0)(0,-0.1)(-0.2,0)}

\rput{0}(0,3.25){\pspolygon[linewidth=0.6pt,fillstyle=solid,fillcolor=black](0.25,0)(0,0.1035)(-0.25,0)}
\rput(-2.3,2.3){\rput{45}(0,0){\pspolygon[linewidth=0.6pt,fillstyle=solid,fillcolor=black](0.25,0)(0,0.1035)(-0.25,0)}}
\rput(2.3,2.3){\rput{-45}(0,0){\pspolygon[linewidth=0.6pt,fillstyle=solid,fillcolor=black](0.25,0)(0,0.1035)(-0.25,0)}}

 %\psdots[dotsize=1pt](2.8,1.2)(2.47,0.63)
 %(2.668,0.375)(2.445,0.93)
\rput(-3.1,1.4){\rput{22.5}(0,0){{1}}}
\rput(3.1,1.4){\rput{22.5}(0,0){{2}}}
     }
}}

\end{picture}
\caption{The true s-triangles in the figure (black triangles labeled by "1"{} and "2"{}) form a pair. The small p-ring associated with this pair is shown by a (partly dashed and partly solid) broken line. A part of this ring is traversed by the compressed flow generated by the triangles 1 and 2. Some v-triangles at the vertices of the ring are also shown. The main diagonal of the octagon orthogonal to the incident flow is shown dashed; the triangles in the pair are mutually symmetric with respect to this diagonal.}
\label{fig:pairing}
\end{figure}

Recall that the collections of vectors $\{ V_\iii \}$ indicate the directions of the thin flows for all s-sets (the vectors $(1,0)$ and $(-1,0)$ correspond to the directions "to the right"{} and "to the left"{}, respectively). The collections of intervals $\{ \UUU_\iii \}$ are responsible for the heights and widths of the thin flows. Also recall that each s-set is the disjoint union of two domains, the larger one and the smaller one.
     %Notice that a thin flow naturally generates a polygonal ring (see Fig.~\ref{fig:strip}).

Each s-set naturally induces two ({\it large} and {\it small}) p-rings; namely, the minimal p-rings containing the larger and the smaller domain, respectively. In the same way, the false triangles induce a p-ring, the same for all triangles. The large p-rings induced by two s-sets coincide, if these s-sets are responsible for incident flows with the same or complementary ($\al$ and $\pi - \al$) angles of incidence (and therefore are at the same level), and do not intersect otherwise. Also, large p-rings do not intersect small p-rings, and both do not intersect the p-ring induced by the false triangles.

The small p-rings induced by two s-sets coincide, if these s-sets are at the same level and the two intervals corresponding to the s-sets coincide. The small p-rings are always disjoint, if the corresponding intervals are disjoint.

Figure~\ref{fig:unilevel} may be viewed as a small part of a larger picture, where a part of a sequence of s-triangles at the same level above a certain polygonal contour is shown. The wider strip shown in light gray is a part of the large p-ring induced by these triangles, and the two thin strips are parts of the small p-rings induced by the s-sets labeled by "1"{} and "2".

Choose the collection of vectors $\{ V_\iii \}$ in such a way that the vectors corresponding to the s-triangles in a pair are opposite (that is, one of them is $(1,0)$ and the other $(-1,0)$). In Fig.~\ref{fig:pairing}, the vectors corresponding to the triangles 1 and 2 are $(-1,0)$ and $(1,0)$, respectively. Choose the collections of intervals $\{ \UUU_\iii \}$ so that the intervals corresponding to paired s-triangles coincide, and are disjoint otherwise. This implies, in particular, that the small p-rings induced by a pair of s-sets coincide.

For each small p-ring induced by a pair of s-sets consider the associated v-sets. (In Fig.~\ref{fig:pairing} the induced octagonal ring is indicated by a broken line, and 3 of 8 associated v-triangles are shown.) The minimal p-ring containing the small p-ring and these v-sets will be called the {\it extended small p-ring induced by the pair of s-sets}; note that its width is twice as much as that of the original small p-ring.

Require, additionally, that the extended small p-rings induced by different pairs do not intersect. It suffices to require that the distance between nearest intervals in the collection $\{ \UUU_\iii \}$ is larger than the sum of their lengths.

The above conditions imposed on $\{ V_\iii \}$ and $\{ \UUU_\iii \}$ guarantee that a portion of an incident flow, after four reflections from a hollow and an s-set, turns into a thin flow, then goes along the induced p-ring making reflections from the associated v-sets, then makes again four reflections from the paired s-set and the corresponding hollow, and finally is transformed into another portion of a flow, which is a continuation of the original one (see Fig.~\ref{fig:pairing}). Moreover, each trajectory in this portion is symmetric with respect to the diagonal of $P_1$ orthogonal to the incident flow, and therefore is invisible.

\begin{opr}\rm
The collection of sets including all the sets (the hollows, true and false s-triangles, and s-sets) in the $[\ve,r,\bbb_\iii,V_\iii,\UUU_\iii]$-systems, as well as all v-sets associated with the small p-rings, is called an $(\ve,r)$-{\it polygonal system resting on the segments} $\SSS_\iii$, or for brevity a {\it \psystem}. The bases of the (true and false) s-triangles, as well as the lateral sides of the v-triangles, are called the {\it segments generated by the p-system}. The corresponding trapezoids (resting on the segments $\SSS_\iii$) are called the {\it trapezoids of the p-system}. The union of these trapezoids is called the {\it envelope of this p-system}. The polygonal contour containing the segments $\SSS_\iii$ is called the {\it polygonal contour of the p-system}.
\end{opr}

Slightly abusing the language, we also say that the p-system is resting on the corresponding collection of hollows.

All segments generated by a p-system lie on polygonal contours, and their union is symmetric with respect to all lines of symmetry of $P_1$.

A fragment of a p-system (including a trapezoids resting on several hollows, two s-triangles and two v-triangles) is schematically represented in Fig.~\ref{fig:SSB}. The segments generated by the p-system are shown in bold.
   %A p-system (including the trapezoids and hollows, only one p-ring and the corresponding v-triangles) is schematically represented in Fig.~\ref{fig:SSB}.

\begin{figure}[h]
\begin{picture}(0,120)
\rput(8,1.9){
\scalebox{1}{

\psline[linewidth=0.6pt](-6,-1.732)(-5,0)(5,0)(6,-1.732)
\psline[linewidth=0.2pt,linestyle=dashed](6.758,-1.73)(5.379,0.656)(-5.379,0.656)(-6.758,-1.73)
   %(-6.866,-1.732)(-5.433,0.75)(5.433,0.75)(6.866,-1.732)
\pspolygon[linewidth=0pt,linecolor=white,fillstyle=solid,fillcolor=white](-1.5,-0.05)(-1.5,0.05)(1.5,0.05)(1.5,-0.05)
\pspolygon[linewidth=0.4pt,linestyle=dashed](-1.5,0)(-4,1.5)(4,1.5)(1.5,0)
\psarc[linewidth=0.6pt](0,0.5){0.707}{-135}{-45}
\psarc[linewidth=0.6pt](-1,0.5){0.707}{-135}{-45}
\psarc[linewidth=0.6pt](1,0.5){0.707}{-135}{-45}
\pspolygon[linewidth=0.2pt,fillstyle=solid,fillcolor=lightgray](0.2,1)(1,0.5)(1.8,1)
\pspolygon[linewidth=0.2pt,fillstyle=solid,fillcolor=lightgray](-0.2,1)(-1,0.5)(-1.8,1)
%%%%%%%%%%%%%%%%%%%%%%%%%%%%%%%%%%%%%%%%%%%%%%%%%%%%%%%%%%%%%%%%%%%%%%%%%%%%%%%%%%%%%%
\pspolygon[linewidth=0.2pt,fillstyle=solid,fillcolor=lightgray](5,0.75)(5.433,0.75)(5.65,0.375)
\pspolygon[linewidth=0.2pt,fillstyle=solid,fillcolor=lightgray](-5,0.75)(-5.433,0.75)(-5.65,0.375)
\psline[linewidth=1.2pt](0.2,1)(1.8,1)
\psline[linewidth=1.2pt](-0.2,1)(-1.8,1)
\psline[linewidth=1.2pt](5,0.75)(5.433,0.75)(5.65,0.375)
\psline[linewidth=1.2pt](-5,0.75)(-5.433,0.75)(-5.65,0.375)
\psline[linecolor=red,arrows=->,arrowscale=1.5,linewidth=0.8pt](-4.25,1.75)(-1.05,-0.18)(0.58,0.75)
\psline[linecolor=red,arrows=->,arrowscale=1.5,linewidth=0.8pt](1.25,0.656)(4,0.656)
\psline[linecolor=red,linewidth=0.2pt](0.58,0.75)(0.77,0.86)(1.2,0.86)(1,0.656)(1.25,0.656)
\psline[linecolor=red,linewidth=0.8pt](4,0.656)(5.15,0.656)
\psline[linecolor=red,arrows=->,arrowscale=1.5,linewidth=0.8pt](5.5,0.45)(6.758,-1.73)
%\psdots[dotsize=1pt](0.58,0.75)(0.77,0.86)(1.2,0.86)(1,0.656)(1.25,0.656)
}
}

\end{picture}
\caption{A fragment of a p-system including two s-triangles and two v-triangles. The trapezoid resting on three hollows is shown dashed, and an initial part of a billiard trajectory is also shown.}
\label{fig:SSB}
\end{figure}
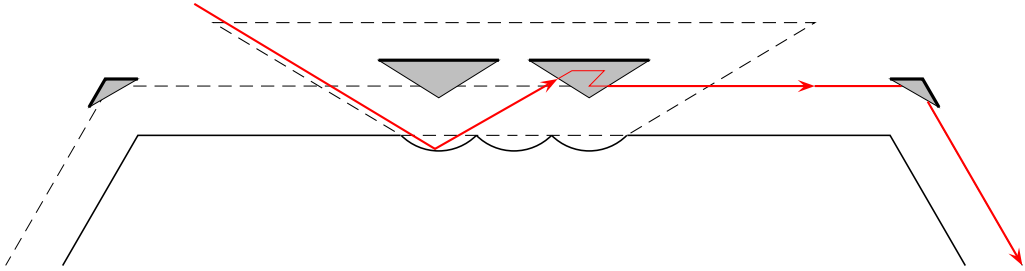

The minimal p-ring containing the trapezoids is called the {\it full p-ring of this p-system}. The large p-rings and the extended small p-rings induced by the s-sets, as well as the p-ring induced by the false s-triangles are called the {\it occupied p-rings of the p-system}. They are disjoint and lie in the full p-ring. The complement of the union of occupied p-rings in the full p-ring is again the union of finitely many rings, and is called the {\it free p-rings of the p-system}. Thus, the free and occupied p-rings are disjoint, and their union is the full ring of the system. (Moreover, the smallest and the largest p-rings in this union are free p-rings.) The free p-rings do not intersect the sets forming the p-system, %and the outer boundaries of the occupied p-rings contain all segments generated by the p-system.
and all segments generated by the p-system are contained in the inner boundaries of free p-rings.
    %Moreover, there are no s- and v-triangles in a boundary of the full p-ring, and therefore the inner (outer) boundary of the full p-ring coincides with the inner (outer) boundary of a free p-ring of the system.

Let a line segment lie on a polygonal contour. The isosceles triangle outside the contour with the angle $\pi/(2n)$ at the base and with the base coinciding with the segment is called the {\it semi-shadow of the segment} (see Fig.~\ref{fig:semishad}\,(a)). Taking if necessary $\ve$ sufficiently small, one can ensure that the semi-shadows of all segments generated by the \psystem\ lie in the free p-rings of this system.

Take again a polygonal contour and consider the $2n$-gon formed by the external bisectors of its vertices. The part of the $2n$-gon outside the contour is called the {\it the semi-shadow of the polygonal contour} (shown gray in Fig.~\ref{fig:semishad}\,(b)). (Thus, the semi-shadow of a polygonal contour is the union of the semi-shadows of its sides.) Now consecutively enumerate the vertices and consider the union of the two $n$-gons formed by the external bisectors through the even and through the odd vertices.
 %, minus the interior of the contour.
It is called the {\it weak semi-shadow of the contour} (the hexagonal star in Fig.~\ref{fig:semishad}\,(b)).

\begin{figure}
\begin{picture}(0,140)
\scalebox{1.2}{

\rput(0.8,2.2){
\scalebox{1.8}{
\rput(1.25,-0.25){\pspolygon[linewidth=0.3pt,fillstyle=solid,fillcolor=white](1,0)(0.5,0.866)(-0.5,0.866)(-1,0)(-0.5,-0.866)(0.5,-0.866)
\pspolygon[linewidth=0.1pt,fillstyle=solid,fillcolor=lightgray](-0.875,0.216)(-0.875,0.505)(-0.625,0.65)
\psline[linewidth=0.8pt](-0.875,0.216)(-0.625,0.65)
\pspolygon[linewidth=0.1pt,fillstyle=solid,fillcolor=lightgray](-0.875,-0.216)(-0.875,-0.505)(-0.625,-0.65)
\psline[linewidth=0.8pt](-0.875,-0.216)(-0.625,-0.65)
\rput(-0.06,0){
\pspolygon[linewidth=0.1pt,fillstyle=solid,fillcolor=lightgray](0.75,0.416)(1,0)(1,0.289)
\psline[linewidth=0.8pt](0.75,0.416)(1,0)
\pspolygon[linewidth=0.1pt,fillstyle=solid,fillcolor=lightgray](0.75,-0.416)(1,0)(1,-0.289)
\psline[linewidth=0.8pt](1,0)(0.75,-0.416)
}
}}
\rput(-0.6,-1.5){\scalebox{0.83}{(a)}}

\rput(2,-0.4){
\rput(2.5,0.1){
\scalebox{0.56}{
\rput{30}(7,0.1){
\scalebox{2.1}{
\pspolygon[linewidth=0.26pt,linestyle=dashed,fillstyle=solid,fillcolor=gray](1,0)(0.5,0.866)(-0.5,0.866)(-1,0)(-0.5,-0.866)(0.5,-0.866)
\pspolygon[linewidth=0.26pt,linestyle=dashed,fillstyle=solid,fillcolor=lightgray](0.5,0.866)(1.5,0.866)(1,0)
\pspolygon[linewidth=0.26pt,linestyle=dashed,fillstyle=solid,fillcolor=lightgray](-0.5,0.866)(-1.5,0.866)(-1,0)
\pspolygon[linewidth=0.26pt,linestyle=dashed,fillstyle=solid,fillcolor=lightgray](0.5,-0.866)(1.5,-0.866)(1,0)
\pspolygon[linewidth=0.26pt,linestyle=dashed,fillstyle=solid,fillcolor=lightgray](-0.5,-0.866)(-1.5,-0.866)(-1,0)
\pspolygon[linewidth=0.26pt,linestyle=dashed,fillstyle=solid,fillcolor=lightgray](-0.5,-0.866)(0.5,-0.866)(0,-1.732)
\pspolygon[linewidth=0.26pt,linestyle=dashed,fillstyle=solid,fillcolor=lightgray](-0.5,0.866)(0.5,0.866)(0,1.732)
}}
\rput(7,0.2){
\scalebox{1.8}{
\pspolygon[linewidth=0.3pt,fillstyle=solid,fillcolor=white](1,0)(0.5,0.866)(-0.5,0.866)(-1,0)(-0.5,-0.866)(0.5,-0.866)
}}
}
}
\rput(4.3,-1.2){\scalebox{0.83}{(b)}}
}

}}
\end{picture}
\caption{(a) Some segments on a polygonal contour and their semi-shadows (shown light gray). (b) The semi-shadow (gray) and the weak semi-shadow (the hexagonal star) of a polygonal contour.}
\label{fig:semishad}
\end{figure}
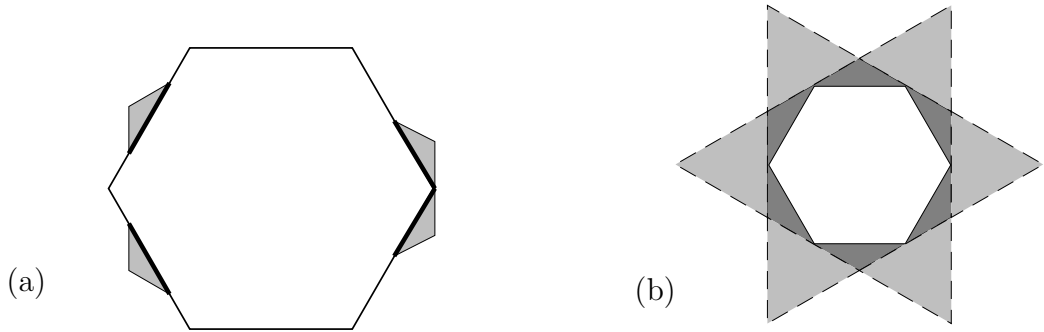

The semi-shadow of a segment generated by a p-system lies either in the envelope of the system (if it corresponds to an s-triangle), or in the complement of the semi-shadow of the corresponding polygonal contour (if it corresponds to a v-triangle).

The union of semi-shadows of the segments generated by a p-system is called the {\it semi-shadow generated by this p-system}. We conclude that it is the union of two subsets, with the former one lying in the envelope of the system, and the latter one in the complement of the semi-shadow of the polygonal contour.

Consider the billiard outside the union of all hollows, s-sets, v-sets, and false s-triangles of a p-system. For a particle of one of the $2n$ incident flows (except for finitely many particles) there are two possibilities.

(a) It makes the first reflection from the interior part of a hollow, then makes 3 reflections from an s-set, then goes along the corresponding polygonal ring making reflections from v-sets, then makes again 3 reflections from the paired s-set and a reflection from the corresponding hollow, and goes away along the same straight line as initially. The trajectory of this particle is invisible.

(b) It makes the first reflection from something else. We do not care about what happens after that.

In the case (a) the part of the trajectory between the first and fourth reflections (as well as its symmetric part, between the first and fourth reflections from the end) lies in the envelope of the p-system outside the semi-shadow generated by the system (see Fig.~\ref{fig:p-system}). The intermediate part of the trajectory, between the fourth reflection and the fourth reflection from the end, lies in an occupied p-ring of the p-system.

\begin{figure}
\begin{picture}(0,180)
\scalebox{0.8}{
\rput(13.5,2.6){

\psline[linewidth=0.7pt](-11,5)(2.887,5)(4.619,2)
%%%%%%%%%%%%%%%%%%%%%%%%%%%%%%%%%%%%%%%%%%%%%%%%%
\psline[linewidth=0pt,linecolor=white,fillstyle=solid,fillcolor=lightgray](-11,4)(2.309,4)(4.041,1)(3.753,0.5)(2.021,3.5)(-11,3.5)
\psline[linewidth=0.5pt](-11,4)(2.309,4)(4.041,1)
\psline[linewidth=0.5pt](-11,3.5)(2.021,3.5)(3.753,0.5)
%%%%%%%%%%%%%%%%%%%%%%%%%%%%%%%%%%%%%%%%%%%%%%%%%
\psline[linewidth=0pt,linecolor=white,fillstyle=solid,fillcolor=lightgray](-11,3.25)(1.876,3.25)(3.608,0.25)(3.464,0)(1.732,3)(-11,3)
\psline[linewidth=0.5pt](-11,3.25)(1.876,3.25)(3.608,0.25)
\psline[linewidth=0.5pt](-11,3)(1.732,3)(3.464,0)
%%%%%%%%%%%%%%%%%%%%%%%%%%%%%%%%%%%%%%%%%%%%%%%%%
\psline[linewidth=0pt,linecolor=white,fillstyle=solid,fillcolor=lightgray](-11,1.75)(1.01,1.75)(2.742,-1.25)(2.454,-1.75)(0.722,1.25)(-11,1.25)
\psline[linewidth=0.5pt](-11,1.75)(1.01,1.75)(2.742,-1.25)
\psline[linewidth=0.5pt](-11,1.25)(0.722,1.25)(2.454,-1.75)
%%%%%%%%%%%%%%%%%%%%%%%%%%%%%%%%%%%%%%%%%%%%%%%%%
\psline[linewidth=0pt,linecolor=white,fillstyle=solid,fillcolor=lightgray](-11,0.75)(0.433,0.75)(2.165,-2.25)(2.309,-2)(0.577,1)(-11,1)
\psline[linewidth=0.5pt](-11,1)(0.577,1)(2.309,-2)
\psline[linewidth=0.5pt](-11,0.75)(0.433,0.75)(2.165,-2.25)
%%%%%%%%%%%%%%%%%%%%%%%%%%%%%%%%%%%%%%%%%%%%%%%%%

\pspolygon[linewidth=0.5pt,linestyle=dashed,fillstyle=solid,fillcolor=lightgray](-7,1.75)(-6,3)(-5,1.75)
\pspolygon[linewidth=0.4pt](-6,0.5)(-7,1.75)(-5,1.75)
     %\pspolygon[linewidth=0.7pt,fillstyle=solid,fillcolor=gray](-7,1.75)(-5,1.75)(-5.4,1.25)(-6.6,1.25)
     %\pspolygon[linewidth=0.7pt,fillstyle=solid,fillcolor=gray](-6.2,0.75)(-5.8,0.75)(-5.6,1)(-6.4,1)
%%%%%%%%%%%%%%%%%%%%%%%%%%%%%%%%%%%%%%%%%%%%%%%%%
%\pspolygon[linewidth=0.5pt,linestyle=dashed,fillstyle=solid,fillcolor=lightgray](-6,4)(-5,4.8)(-4,4)
\pspolygon[linewidth=0.4pt](-6,4)(-5,2.75)(-4,4)
     %\pspolygon[linewidth=0.7pt,fillstyle=solid,fillcolor=gray](-6,4)(-4,4)(-4.4,3.5)(-5.6,3.5)
     %\pspolygon[linewidth=0.7pt,fillstyle=solid,fillcolor=gray](-5.4,3.25)(-4.6,3.25)(-4.8,3)(-5.2,3)
%%%%%%%%%%%%%%%%%%%%%%%%%%%%%%%%%%%%%%%%%%%%%%%%%

\psline[linewidth=0.7pt](-11,0)(-10,0)
\psline[linewidth=0.7pt](-6,0)(0,0)(1.732,-3)

\psarc[linewidth=0.7pt](-9,1.9){2.15}{-118}{-62}
\psarc[linewidth=0.7pt](-7,1.9){2.15}{-118}{-62}

\psline[linewidth=0.5pt,linestyle=dashed](-11,0.03)(-6,0.03)(-2,4.97)(-11,4.97)

\psline[linecolor=red,arrows=->,arrowscale=2,linewidth=0.9pt](-10.9,2)(-8.8,-0.25)(-7.4,1.5)
\psline[linecolor=red,linewidth=0.9pt](-7.4,1.5)(-5.65,3.7)(-4.5,3.7)(-5.1,3.125)(-4.7,3.125)
\psline[linecolor=red,arrows=->,arrowscale=2,linewidth=0.9pt](-4.7,3.125)(0,3.125)
\psline[linecolor=red,linewidth=0.9pt](0,3.125)(1.66,3.125)
\psline[linecolor=red,arrows=->,arrowscale=2,linewidth=0.9pt](1.804,3.125)(3.536,0.125)
%%%%%%%%%%%%%%%%%%%%%%%%%%%%%%%%%%%%%%%%%%%%%%%%%
\pspolygon[linewidth=0.7pt,fillstyle=solid,fillcolor=black](0.177,1)(0.577,1)(0.777,0.654)
\pspolygon[linewidth=0.7pt,fillstyle=solid,fillcolor=black](1.476,3.25)(1.876,3.25)(2.076,2.904)
%%%%%%%%%%%%%%%%%%%%%%%%%%%%%%%%%%%%%%%%%%%%%%%%%
\pspolygon[linewidth=0.3pt,fillstyle=solid,fillcolor=black](-6,4)(-4,4)(-4.4,3.5)(-4.5,3.75)(-4.55,3.85)(-5.4,3.85)(-5.6,3.75)(-5.73,3.67)
\pspolygon[linewidth=0.3pt,fillstyle=solid,fillcolor=black](-5,3)(-5.16,3.16)(-5.16,3)
%%%%%%%%%%%%%%%%%%%%%%%%%%%%%%%%%%%%%%%%%%%%%%%%%
\rput(-1.07,-2.25){
\pspolygon[linewidth=0.3pt,fillstyle=solid,fillcolor=black](-6,4)(-4,4)(-4.4,3.5)(-4.5,3.75)(-4.55,3.85)(-5.2,3.85)(-5.4,3.75)(-5.53,3.67)(-5.73,3.67)
\pspolygon[linewidth=0.3pt,fillstyle=solid,fillcolor=black](-4.95,3)(-5.11,3.16)(-5.11,3)
}
}}
\end{picture}
\caption{Fragments of the following objects are shown: a trapezoid of a p-system (a part of its boundary is shown by dashed line); two v-triangles of this system (shown black); two s-sets (shown black) and the corresponding s-triangles; the semi-shadow of an s-triangle (shown light-gray and bounded by dashed line); the full p-ring of the system composed of 5 free and 4 occupied (two large and two extended small) p-rings. A fragment of a billiard trajectory is also shown.}
\label{fig:p-system}
\end{figure}
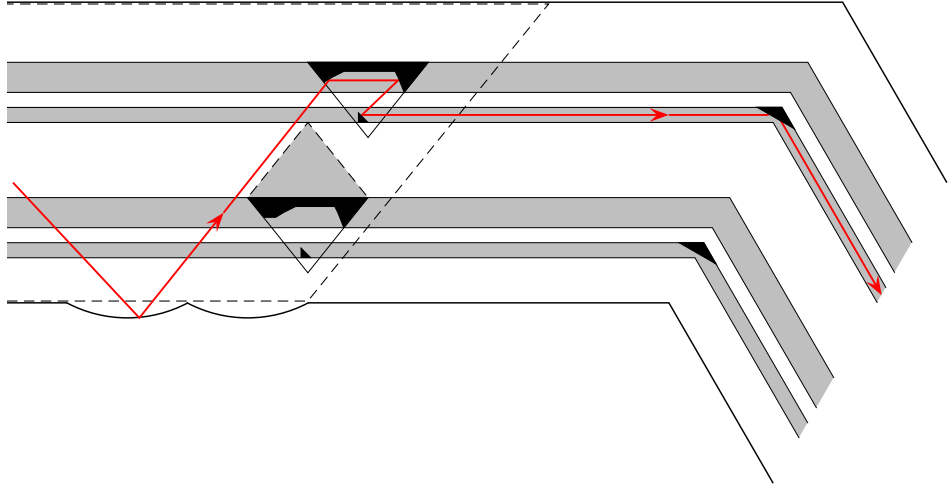

\section{Constructing a hierarchy of polygonal systems}

Take polygons $P$ and $P'$ homothetic to $P_1$ with the center at $O$ and such that $P'$ lies in the weak semi-shadow of $P$ and $P_1 \subset P \subset P' \subset P_2$ (and moreover, both $P \setminus P_1$ and $P' \setminus P$ have nonempty interior).

Now determine an iterative procedure leading to the construction of an invisible set. At each step $i = 0,\, 1,\, 2,\ldots$ of the procedure we inductively define a set $B_i$ and a marked part of its boundary $J_i$. The marked part of boundary is responsible for "visibility"{} of the set $B_i$, and its length $|J_i|$ goes to zero as $i \to \infty$. We also require that $J_i$ is the finite union of line segments lying on polygonal contours, and that these contours are inner boundaries of p-rings disjoint with $B_i$. Both $B_i$ and $J_i$ are symmetric with respect to all lines of symmetry of $P$, and additionally, $P_1 \subset B_i \subset P'$ and $\pl B_i \cap \pl P_1 = \emptyset = \pl B_i \cap \pl P'$.   %As a result we obtain the invisible set $B$.

Initially we have $B_0 = P$ and $J_0 = \pl P$; that is, the original set coincides with the polygon and all its boundary is marked.

At the $i$th step of the procedure we do the following.
\vspace{2mm}
%\begin{quote}

1) Take a sub-interval of each interval in $J_i$, and let $\tilde J_i$ be the union of these sub-intervals. We require that $\tilde J_i$ is symmetric with respect to all lines of symmetry of $P$ and the total length of the remaining part of $J_i$ is smaller than $\bt_i$,\, $|J_i \setminus \tilde J_i| < \bt_i$, where $\lim_{i\to\infty} \bt_i =0$.

2) By the inductive hypothesis, there are finitely many p-rings disjoint with $B_i$ with the inner boundaries containing all the sub-intervals. Without loss of generality we assume that all these p-rings are contained in $P'$. For each of these p-rings, take an $(\ve,r)$-polygonal system (p-system) resting on the corresponding sub-intervals, with $\ve$ and $r$ being the same for all p-systems. Take the parameter $r$ so small that each trapezoid of these p-systems lies in the corresponding p-ring, and also lies in the semi-shadow of the corresponding interval of $J_i$. Take $\ve$ so small that the total length of the segments generated by all the p-systems is smaller than $\bt_i$, and the hollows of the p-systems lie outside $P_1$.

3) Let $\tilde B_i$ be the modification of $B_i$ obtained by substituting the chosen sub-intervals by the added circular hollows. We have $\tilde B_i \subset B_i$. The new set $B_{i+1}$ is the union of the modified set $\tilde B_i$ and the sets (s-sets, v-sets, and false s-triangles) forming the added p-systems. The new marked part of boundary, $J_{i+1}$, is the union of $J_i \setminus \tilde J_i$ and the segments generated by these p-systems.
    \vspace{2mm}
%\end{quote}

A part of $J_{i+1}$ lies in the part of $\pl B_i$ which is not substituted with hollows, and therefore lies in $\pl B_{i+1}$. The remaining part of $J_{i+1}$ lies on the boundary of the added sets. It follows that $J_{i+1} \subset \pl B_{i+1}$.

Due to the construction of a p-system, the union of segments generated by the added p-systems is the finite union of disjoint segments and is symmetric with respect to all lines of symmetry of $P$. Hence $J_{i+1}$ also satisfies these conditions.

It follows from the construction that the full p-rings of the added p-systems are disjoint. Each segment in $J_i \setminus \tilde J_i$ lies in the inner boundary of the full p-ring of an added p-system, and therefore also lies in the inner boundary of a free ring of this system. On the other hand, each segment generated by a p-system also lies on the inner boundary of a free ring of the system. These free rings do not contain points of $B_i$, and also do not contain points of the added sets. Thus, all inductive assumptions for the step $i+1$ are satisfied.

That is, we start with a unique domain --- the polygon $P$. At each step of the procedure we reduce the existing domains (by making hollows on their boundary) and add new domains (the sets of the added p-systems) that are mutually disjoint and are also disjoint with the existing ones. The resulting set $B$ is the union of all reduced domains obtained in this procedure. In exact terms it can be defined as $B = \cup_{k=1}^\infty \cap_{i \ge k} B_i$, or (equivalently) as $B = \cap_{k=1}^\infty \cup_{i \ge k} B_i$.

   %Note that $|J_{i+1}| < 2\bt_i \to 0$ as $i \to \infty$, and therefore

The p-systems added in the course of the procedure form a tree (see Fig.~\ref{fig:tree}). There is a unique p-system at the first step; it generates several p-systems of the second step; each of them in turn gives rise to several p-systems of the third step, etc.

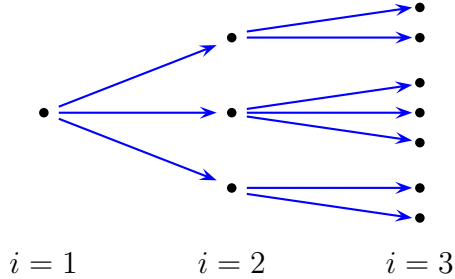
\begin{figure}
\begin{picture}(0,105)
\scalebox{1}{
\rput(5,2){

\psline[linecolor=blue,arrows=->,arrowscale=1.5,linewidth=0.8pt](0.2,0.08)(2.3,0.9)
\psline[linecolor=blue,arrows=->,arrowscale=1.5,linewidth=0.8pt](0.2,0)(2.3,0)
\psline[linecolor=blue,arrows=->,arrowscale=1.5,linewidth=0.8pt](0.2,-0.08)(2.3,-0.9)

\psline[linecolor=blue,arrows=->,arrowscale=1.5,linewidth=0.8pt](2.7,1.08)(4.9,1.4)
\psline[linecolor=blue,arrows=->,arrowscale=1.5,linewidth=0.8pt](2.7,1)(4.9,1)
\psline[linecolor=blue,arrows=->,arrowscale=1.5,linewidth=0.8pt](2.7,-1.08)(4.9,-1.4)
\psline[linecolor=blue,arrows=->,arrowscale=1.5,linewidth=0.8pt](2.7,-1)(4.9,-1)
\psline[linecolor=blue,arrows=->,arrowscale=1.5,linewidth=0.8pt](2.7,-0.05)(4.9,-0.37)
\psline[linecolor=blue,arrows=->,arrowscale=1.5,linewidth=0.8pt](2.7,0.05)(4.9,0.37)
\psline[linecolor=blue,arrows=->,arrowscale=1.5,linewidth=0.8pt](2.7,0)(4.9,0)

\psdots(0,0)
(2.5,1)(2.5,0)(2.5,-1)
(5,1.4)(5,1)(5,0.4)(5,0)(5,-0.4)(5,-1)(5,-1.4)

\rput(0,-2){\scalebox{1}{$i=1$}}
\rput(2.5,-2){\scalebox{1}{$i=2$}}
\rput(5,-2){\scalebox{1}{$i=3$}}
}}
\end{picture}
\caption{The hierarchy of p-systems of the procedure.}
\label{fig:tree}
\end{figure}

In a pair "parent -- child", the full p-ring of the p-system child is contained in a free p-ring of the p-system parent. It follows that in a pair "ancestor -- successor", the full p-ring of the p-system successor is contained in a free p-ring of the p-system ancestor. The polygonal contour of the p-system child lies outside the polygonal contour of the p-system parent. It follows that the polygonal contour of the the p-system successor lies outside the polygonal contour of the p-system ancestor. Further, the envelope of the p-system child is contained either in the envelope of the p-system parent (and moreover, in the semi-shadow generated by this system), or outside the semi-shadow of its polygonal contour. It follows that the same is true for a pair "ancestor -- successor", and in particular, the envelope of the p-system successor is contained either in the semi-shadow generated by the p-system ancestor, or outside the envelope of this system.

Further, if two p-systems do not form a pair "ancestor -- successor", then they are successors of a common p-system ancestor, and therefore their full p-rings lie in disjoint free p-rings of this system.

Now consider a p-system from the tree and a billiard trajectory outside the union of the sets (hollows, s- and v-sets, and false s-triangles) forming this system. Assume that the first reflection of this trajectory is from the interior part of a hollow (and therefore it satisfies condition (a) at the end of the previous section).

\begin{lemma}\label{l:traj}
The part of the trajectory between the first and the last reflections does not intersect the sets of the other p-systems from the tree.
\end{lemma}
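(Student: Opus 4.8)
The plan is to reduce everything to the tree structure recorded in the four structural facts preceding the lemma, and ultimately to the disjointness of free and occupied p-rings. Throughout write $\PPP$ for the given p-system carrying the trajectory and $\PPP'$ for any other p-system of the tree. First I would record the single observation on which the argument rests: every reflecting set of a p-system (a hollow, an s-set, a v-set, or a false s-triangle) lies in the full p-ring of that system. Indeed the hollows lie on its polygonal contour, which is the inner boundary of the full p-ring, while the s-sets lie in the large and small p-rings, the v-sets in the extended small p-rings, and the false s-triangles in the p-ring they induce; all of these are occupied p-rings and hence lie in the full p-ring. In particular a free p-ring of a p-system meets none of that system's own reflecting sets, as was already stated. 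On the other side, by condition (a) of the previous section the trajectory of $\PPP$ splits into three pieces: the two end pieces (before the fourth reflection and after the fourth-from-last one) lie in the envelope of $\PPP$ with the semi-shadow generated by $\PPP$ removed, while the middle piece lies in a single occupied p-ring of $\PPP$. Thus the whole trajectory lies in the full p-ring of $\PPP$, and moreover the envelope of $\PPP$ lies in the semi-shadow of the polygonal contour of $\PPP$, since each trapezoid of $\PPP$ lies in the semi-shadow of its interval.

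I would then run the case analysis according to the position of $\PPP'$ in the tree. \emph{Case 1: $\PPP$ and $\PPP'$ are not in an ancestor--successor relation.} Then they are successors of a common ancestor, and their full p-rings lie in \emph{disjoint} free p-rings of it; hence the two full p-rings are disjoint. As the trajectory lies in the full p-ring of $\PPP$ and all sets of $\PPP'$ lie in the full p-ring of $\PPP'$, they cannot meet. \emph{Case 2: $\PPP'$ is an ancestor of $\PPP$.} Then $\PPP$ is a successor of $\PPP'$, so the full p-ring of $\PPP$ is contained in a free p-ring of $\PPP'$; since that free p-ring meets no reflecting set of $\PPP'$ and the entire trajectory lies inside the full p-ring of $\PPP$, the trajectory avoids every set of $\PPP'$. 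This case is immediate.

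\emph{Case 3: $\PPP'$ is a successor of $\PPP$} is the crux. Here the full p-ring of $\PPP'$ is contained in a free p-ring of $\PPP$, and $\PPP'$ rests on a segment $s$ generated by $\PPP$. The middle piece of the trajectory, lying in an occupied p-ring of $\PPP$, is disjoint from all free p-rings of $\PPP$ and in particular from the full p-ring of $\PPP'$, so it misses the sets of $\PPP'$. For the two end pieces I would use the semi-shadow dichotomy for the pair ancestor--successor: if $s$ is the base of an s-triangle, then by the containment of the trapezoids of $\PPP'$ in the semi-shadow of $s$, the relevant sets of $\PPP'$ lie in the semi-shadow of $s$, hence in the semi-shadow generated by $\PPP$, which the end pieces avoid by construction; if $s$ is a lateral side of a v-triangle, then the envelope of $\PPP'$ lies outside the semi-shadow of the contour of $\PPP$, which contains the envelope of $\PPP$, so the end pieces (living in the envelope of $\PPP$) cannot reach it. In either subcase the end pieces miss the sets of $\PPP'$, completing the lemma.

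The main obstacle will be the bookkeeping in Case 3, and specifically confirming that \emph{every} reflecting set of the successor $\PPP'$ is captured by the dichotomy, including the v-sets, which protrude from the envelope of $\PPP'$ toward the vertices. I would handle this by combining the fact that the trapezoids of $\PPP'$ sit in the semi-shadow of $s$ with the statement that the semi-shadow of each segment generated by a p-system lies either in its envelope or outside the semi-shadow of its contour, and then transporting these containments one level up to $\PPP$ so that the entire full p-ring of $\PPP'$ is pinned either inside the semi-shadow generated by $\PPP$ or outside the semi-shadow of the contour of $\PPP$. The other two cases, by contrast, require nothing beyond the free/occupied disjointness and the ancestor--successor containment of full p-rings, and are essentially automatic.
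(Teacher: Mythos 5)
Your proposal is correct and follows essentially the same route as the paper's proof: the same three-way case split (ancestor, unrelated, successor), with the first two cases settled by the containment of full p-rings in free p-rings of ancestors and the disjointness of full p-rings of unrelated systems, and the successor case settled by splitting the trajectory into the two end pieces (envelope minus generated semi-shadow) and the middle piece (an occupied p-ring, hence disjoint from the successor's full p-ring). The extra bookkeeping you flag in Case 3 is real but is exactly the dichotomy the paper invokes, so no substantive difference.
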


\begin{proof}
We need to prove that this part of the trajectory does not intersect sets of (a) p-systems ancestors; (b) p-systems that are neither ancestors nor successors; (c) p-systems successors.

(a) This part of the trajectory lies in the full ring of the system, and therefore also lies in a free p-ring of each p-system ancestor; hence it does not intersect the sets of p-systems ancestors.

(b) Further, if a system does not form a pair "ancestor -- successor"{} with the given p-system, then the full p-rings of these systems are disjoint, and therefore the sets of that system do not intersect the given part of the trajectory.

(c) Finally, the part of the trajectory between the first and the fourth reflections, as well as the symmetric part between the first and the fourth reflections from the end, lie in the envelope of the system outside the semi-shadows generated by it, and therefore does not intersect the sets of each p-system successor. On the other hand, the part of the trajectory between the fourth reflection and the fourth reflection from the end lies in an occupied p-ring of the p-system, and therefore does not intersect the sets of each p-system successor.
\end{proof}

The following lemma finishes the proof of Theorem \ref{t2}.

\begin{lemma}\label{l:invis}
The set $B$ is invisible for the $2n$ incident flows.
\end{lemma}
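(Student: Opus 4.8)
The plan is to fix one of the $2n$ incident directions $v$, parameterize the incident lines by the signed coordinate $s$ on a transversal (with the natural one-dimensional Lebesgue measure $ds$), and show that the set of $s$ for which the trajectory $T_s$ in $\RRR^2\setminus B$ fails to be invisible is Lebesgue-null. Since there are only $2n$ directions, this proves the lemma. The whole argument is driven by the decomposition of the first reflection point of $T_s$ and by the fact that $|J_i|\to 0$.

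The first ingredient (\emph{the good event}) is that if the first reflection of $T_s$ occurs at the interior part of a hollow of some p-system of the tree, then $T_s$ is invisible. Indeed, by property (a) established for a p-system, the particle then performs the designed passage (one reflection from the hollow, three from an s-set, a run along the induced p-ring with reflections from the associated v-sets, three from the paired s-set, and one from the corresponding hollow) and leaves along its incident line in the incident direction; by Lemma \ref{l:traj} this passage meets the sets of no other p-system of the tree, so it is realized undisturbed in $\RRR^2\setminus B$ and $T_s$ is genuinely invisible.

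The second ingredient, which I expect to be the main obstacle, is a \emph{shadowing property}: every reflecting body of the construction that is not a hollow --- an s-set, a v-set, or a false s-triangle --- lies strictly in the inward shadow, with respect to the flow $v$, of a segment generated by its own p-system. This is a static geometric statement, and it is exactly what the placement of the objects is designed to provide: each (true or false) s-triangle has its apex turned toward the contour and its base (a generated segment) turned outward, so the whole triangle, and in particular the s-set it contains, sits below that base; an incident ray travelling inward therefore meets the generated base, strictly outward of the body, before it could reach the body. The delicate part is the analogous verification for the v-sets against the lateral sides of their v-triangles, which must be carried out, direction by direction for all $2n$ flows, using the symmetry of the v-set construction and the inclusion of $P'$ in the weak semi-shadow of $P$; this is the technical heart of the proof.

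Granting these two facts I finish as follows. Discard the countably many lines through a vertex of a polygonal contour or an endpoint of a hollow (a null set), and the lines meeting the \emph{permanent} marked boundary $S_\infty:=\liminf_i J_i$; since at each step both the unprocessed part $J_i\setminus\tilde J_i$ and the newly generated segments have length $<\bt_i\to 0$, one has $|J_i|\to 0$, whence $\mathcal{H}^1(S_\infty)\le\liminf_i|J_i|=0$ and the lines meeting $S_\infty$ form a null set. For a remaining $s$, let $p$ be the first reflection point of $T_s$. If $p$ is a hollow interior, $T_s$ is invisible by the good event. Otherwise $p$ lies on a reflecting body, and by shadowing the ray meets, strictly before $p$, the location of the generated segment $\sigma$ shadowing that body. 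As $s$ avoids $S_\infty$, $\sigma$ cannot be a flat segment of $\pl B$ (that would force the first reflection onto $\sigma\subset S_\infty$); hence $\sigma$ has been carved and replaced by the hollows and sets of a deeper p-system, which the ray thus enters before reaching $p$. There it either reflects, so $p$ was not the first reflection, or it makes a clean hollow-interior entry and is invisible by the good event --- both contradicting our assumptions. Therefore $p$ must be a hollow interior and $T_s$ is invisible. Consequently the non-invisible $s$ form a null set for each of the $2n$ directions, so $B$ is invisible, completing the proof of Theorem \ref{t2} and hence of Theorem \ref{t1}.
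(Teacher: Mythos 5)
Your overall architecture matches the paper's: reduce invisibility to the claim that, for almost every incident line, the first reflection occurs at the interior of a hollow, and then invoke Lemma \ref{l:traj} to conclude that the designed invisible trajectory of the single p-system resting on that hollow is realized undisturbed in $B^c$. Your treatment of the residual marked boundary via $S_\infty=\liminf_i J_i$ and $|J_i|\to 0$ is sound, and is in fact more explicit than the paper, which simply asserts that in the limit the relevant segments are substituted by the union of infinitely many hollows.

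The genuine gap is the ``shadowing property,'' which you yourself identify as the technical heart and then grant without proof; moreover, the statement you grant is not the correct one. It is false that every s-set, v-set and false s-triangle lies, for \emph{every} one of the $2n$ directions, in the inward shadow of a segment generated by \emph{its own} p-system. The paper's shielding argument is a direction-dependent case analysis with shields of different natures. For an s-triangle, the lateral sides are shielded by the triangle's own base only for the $n$ directions pointing toward the apex; for the opposite $n$ directions the shield is not a segment of its own p-system at all, but the fact that the triangle sits inside the trapezoid, hence inside the semi-shadow (angle $\pi/(2n)$ at the base) of the segment on which the p-system rests --- a segment generated by the \emph{parent} and already carved into the hollows the ray would have to cross first. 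For a v-triangle the shield for $n-1$ directions is the polygon $P$ itself, via the hypothesis that $P'$ lies in the weak semi-shadow of $P$; for the opposite $n-1$ directions it is the v-triangle itself; and for the remaining two directions, parallel to the base, accessibility is a measure-zero event. Your closing contradiction argument (``the shadowing segment $\sigma$ has been carved, so the ray enters a deeper p-system before reaching $p$'') relies on $\sigma$ being a generated segment that is subsequently processed, which is precisely what fails when the shield is $P$ or the reflecting body itself. So the step you defer is not a routine verification of the statement you wrote down; it requires the different, direction-by-direction argument that the paper actually carries out at the start of its proof.
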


\begin{proof}
First consider the s- and v-triangles included in the p-systems of the tree. We are going to show that the lateral sides of the s-triangles and the bases of the v-triangles are not accessible for the first reflection of the $2n$ incident flows.

Indeed, each v-triangle lies in the weak semi-shadow of $P$, and therefore is protected by $P$ from the first reflection of $n-1$ incident flows. Further, its base is protected from the flows with the opposite $n-1$ directions by the v-triangle itself. Finally, it is not accessible for the two remaining opposite directions parallel to the base. Thus, the base is shielded from the first reflection.

Further, the lateral sides of an s-triangle are shielded by the s-triangle itself from the first reflection of $n$ incident flows. They are also protected from the flows with the opposite $n$ directions, since the s-triangle is situated in the semi-shadow generated by the p-system parent. Thus, the lateral sides of the s-triangle are not accessible for the first reflection.

$B$ is the disjoint union of infinitely many domains of three kinds. The (unique) domain of the first kind is the reduced polygon $P$, with its boundary substituted by the union of infinitely many circular hollows. Domains of the second kind are reduced s-sets, with the base of each s-set substituted by the union of infinitely many hollows. Domains of the third kind are reduced v-sets, where the lateral sides of each v-set are substituted by the union of infinitely many hollows. The argument in the beginning of the proof shows that only hollows are accessible for the first reflection.

Consider a particle of a flow in $B^c$. If it does not make reflections from $B$, there is nothing to do. If it does, then the first reflection is from a hollow. Take the p-system resting on this hollow, and consider the (auxiliary) trajectory of the particle in the complement of this p-system with the same initial data (and therefore also with the same point of the first reflection) as the original particle. Lemma \ref{l:traj} guarantees that the part of the trajectory between the first and the last reflections does not intersect sets of the other p-systems, and therefore of course does not intersect the corresponding reduced sets. Since the part of its trajectory before the first reflection (a half-line) coincides with the original trajectory, it does not intersect the reduced sets of other p-systems, and the same is true for the symmetric part of the trajectory (after the last reflection).

It follows that the auxiliary trajectory coincides with the original one (in $B^c$), and since the former trajectory is invisible, so is the latter one. Hence $B$ is invisible for the $2n$ incident flows.

Thus, Lemma \ref{l:invis}, and therefore also Theorem \ref{t2}, are proved.
\end{proof}

\section*{Acknowledgements}

This work was supported by Portuguese funds through CIDMA -- Center for Research and Development in Mathematics and Applications and FCT -- Portuguese Foundation for Science and Technology, within the project PEst-OE/MAT/UI4106/2014, as well as by the FCT research project PTDC/MAT/113470/2009.

%\newpage

\end{document}